%
\input ./style/arxiv-vmsta.cfg
\documentclass[numbers,compress,v1.0.1]{vmsta}

\usepackage{vtexurl}

\volume{2}
\issue{4}
\pubyear{2015}
\firstpage{371}
\lastpage{389}
\doi{10.15559/15-VMSTA44}


\startlocaldefs

\urlstyle{rm}
\allowdisplaybreaks
\endlocaldefs

\newtheorem{theorem}{Theorem}[section]
\newtheorem{lemma}{Lemma}[section]
\newtheorem{corollary}{Corollary}[section]

\theoremstyle{definition}
\newtheorem{definition}{Definition}[section]
\newtheorem{remark}{Remark}[section]

\theoremstyle{remark}
\newtheorem{case}{Case}

\begin{document}
\begin{frontmatter}

\title{On packing dimension preservation by distribution functions of
random variables with independent $\tilde{Q}$-digits}

\author{\inits{O.}\fnm{Oleksandr}\snm{Slutskyi}}\email{slualexvas@gmail.com}
\address{Department of Mathematical Analysis and Differential Equations
of Dragomanov National Pedagogical University of Ukraine, Ukraine}

\markboth{O. Slutskyi}{PDP-transformations}

\begin{abstract}
The article is devoted to finding conditions for the packing dimension
preservation by distribution functions of random variables with
independent $\tilde{Q}$-digits.

The notion of ``faithfulness of fine packing systems for packing
dimension calculation'' is introduced, and connections between this
notion and packing dimension preservation are found.
\end{abstract}

\begin{keyword}
Packing dimension of a set\sep
Hausdorff--Besicovitch dimension of a set\sep
faithfulness of fine packing system for packing dimension calculation\sep
$\tilde{Q}$-expansion of real numbers\sep
packing-dimension-preserving transformations
\MSC[2010] 28A78\sep28A80
\end{keyword}

\received{16 October 2015}
\revised{13 December 2015}
\accepted{13 December 2015}
\publishedonline{23 December 2015}
\end{frontmatter}

\section{Introduction}
Let $(M,\rho)$ be a metric space. Suppose that the
Hausdorff--Besicovitch dimension $\dim_H$ \cite
{falconer_fractal_geometry} is well defined in $(M,\rho)$. A
transformation $f: M\rightarrow M$ is called dimen\-sion-preserving
transformation \cite{torbin-s-adic-dp-2007} or $\mathit{DP}$-transformation if
\[
\quad\dim_H \bigl(f(E) \bigr)=\dim_H(E),\quad \forall E
\subset M.
\]

Let $G(M,\dim_H)$ be the set of all $\mathit{DP}$-transformations defined on
$(M,\rho)$. It is easy to see that $G$ forms a group w.r.t.\ the
composition of transformations. It is well known that any bi-Lipschitz
transformation belongs to this~group~\cite{falconer_fractal_geometry}.
However, $G$ is essentially wider than the group of all bi-Lipschitz
transformations. In 2004, some sufficient conditions for belonging of
distribution functions of random variable with independent $s$-adic
digits to group $G$ was proved by G. Torbin et al. \cite{apt2004}. There exist a
lot of $\mathit{DP}$-functions that are not bi-Lipschitz.

Sufficient conditions for distribution functions of random variables
with independent $s$-adic digits to be $\mathit{DP}$ have been found by G. Torbin
\cite{torbin-s-adic-dp-2007} in 2007. These conditions were generalized
for $Q$ by G. Torbin \cite{torbin-q-symbols-dp-2007} and later for $Q^*$-
and~$\tilde{Q}$-expansions by S. Albeverio, V. Koshmanenko, M. Pratsiovytyi, and
G. Torbin \cite{apt2008, AKPT2011}.

Recently, G. Torbin and M. Ibragim proved rather general sufficient
conditions for distribution functions of random variables with
independent $\tilde{Q}$-digits to be in $\mathit{DP}$-class.
The notion of fine covering system faithfulness for $\dim_H$
calculation \cite{AILT} plays an important role in the proof of these
conditions. This notion gives us the possibility to consider coverings
by sets from some family $\varPhi$ and to be sure that a ``dimension''
calculated in such a way is equal to $\dim_H$. Faithfulness of the
family of all $s$-adic cylinders (if $s$ is fixed) have been proven by
Billingsley \cite{billingsley-2} in 1961. Faithfulness of the family of
$Q$-cylinders have been proven by M. Pratsiovytyi and A. Turbin \cite
{turbin_prats} in 1992, and faithfulness of the family of
$Q^*$-cylinders (under the condition of separation from zero of the
corresponding coefficients) have been proven by S. Albeverio and G. Torbin
\cite{at2005} in 2005. It is necessary to remark that the last result
can be easily generalized to $\tilde{Q}$-expansion under a similar condition.

In 1982, C. Tricot \cite{tricot_two_definitions} introduced the notion of
packing dimension $\dim_P$. This dimension is in some sense dual to the
Hausdorff--Besicovitch dimension: the definition of $\dim_H$ of a set
$F$ is based on $\varepsilon$-coverings of this figure, but the
definition of $\dim_P$ is based on $\varepsilon$-packings (the
countable sets of disjoint open balls $B_k(r_k,c_k), k\in\mathbb{N}$,
with radii $r_k \leqslant\varepsilon$ and centers $c_k \in F$). The
packing dimension has all ``good'' properties of a fractal dimension,
such as the countable stability. Therefore, proving or disproving
similar results for $\dim_P$ is important. For example, we consider the
group of packing-dimension-preserving transformations (or
$\mathit{PDP}$-transformations).
\begin{definition}
The transformation $f$ is said to be a $\mathit{PDP}$-transformation if
\[
\forall E\subset M,\quad\dim_P\bigl(f(E)\bigr)=
\dim_P(E).
\]
\end{definition}
There are a lot of problems with proving of many conjectures for $\dim
_P$ because work with packings is essentially more complicated than
work with coverings \cite{mattila_geometry_of_sets}.

These problems are solving bit by bit. For example, M. Das \cite
{das-billingsley-packing-dimension} has proven the Billingsley theorem
for packing dimension;
J. Li \cite{li_pdp} 
obtained some sufficient conditions for distribution functions of
random variables with independent $\tilde{Q}$-digits to be in $\mathit{PDP}$-class.
Namely, J. Li has proven the following theorem.
\begin{theorem}
Let $F_\xi$ be the distribution function of a random variable \(\xi\)
with independent $\tilde{Q}$-representation. If $\inf_{i,j}
q_{ij}=q_*>0$ and $\inf_{i,j} p_{ij}=p_*>0$, then $F_\xi$ preserves the
packing dimension if and only if
\[
\limsup_{k\to\infty} \frac{h_1+h_2+\cdots+h_k}{b_1+b_2+\cdots+b_k}=1,
\]
where $h_j=-\sum_{i=1}^{n_j} p_{ij}\ln p_{ij}$ and $b_j=-\sum_{i=1}^{n_j} p_{ij}\ln q_{ij}$.
\end{theorem}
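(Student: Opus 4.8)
The plan is to recast the statement as a comparison of two cylinder packing systems, to prove necessity by testing $F_\xi$ on the set of $\mu_\xi$-typical points, and to prove sufficiency by upgrading the averaged entropy condition to a uniform subexponential distortion estimate along a subsequence of cylinder ranks. First I would fix notation: a rank-$k$ cylinder $\Delta_{\alpha_1\cdots\alpha_k}$ is, as a subinterval of the domain, of Euclidean length $\prod_{j\le k}q_{\alpha_j j}$, and since $F_{\xi*}\mu_\xi=\lambda$ its image $F_\xi(\Delta_{\alpha_1\cdots\alpha_k})$ is the interval of length $\mu_\xi(\Delta_{\alpha_1\cdots\alpha_k})=\prod_{j\le k}p_{\alpha_j j}$. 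Thus $F_\xi$ carries the $\tilde Q$-cylinder packing system bijectively onto the ``$p$-cylinder'' packing system, and since $q_*>0$ and $p_*>0$ both systems are fine packing systems faithful for $\dim_P$ (the packing analogue of the covering faithfulness recalled in the Introduction). Hence $\dim_P(E)$ may be computed through $q$-cylinder packings and $\dim_P(F_\xi E)$ through $p$-cylinder packings. I also record the Gibbs inequality $h_j\le b_j$, i.e.\ the non-negativity of the Kullback--Leibler divergence $D_j:=\sum_i p_{ij}\ln(p_{ij}/q_{ij})=b_j-h_j\ge0$, which already shows that the ratio in the statement never exceeds $1$; the condition asserts that it is asymptotically sharp along a subsequence.

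For necessity I would test $F_\xi$ on the set $T$ of $\mu_\xi$-typical points. Writing $s_k(x)=\bigl(\sum_{j\le k}\ln p_{\alpha_j j}\bigr)\big/\bigl(\sum_{j\le k}\ln q_{\alpha_j j}\bigr)$ for the local scaling exponent along the expansion of $x$, the strong law of large numbers applied separately to the bounded increments $-\ln p_{\alpha_j j}$ and $-\ln q_{\alpha_j j}$ (boundedness being guaranteed by $p_*,q_*>0$, which also forces $\sup_j n_j<\infty$) yields $s_k(x)-\frac{h_1+\cdots+h_k}{b_1+\cdots+b_k}\to0$ for $\mu_\xi$-a.e.\ $x$. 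Consequently the upper local dimension satisfies $\overline d_{\mu_\xi}(x)=\limsup_k s_k(x)=\limsup_k\frac{h_1+\cdots+h_k}{b_1+\cdots+b_k}=:S$ on $T$, and the packing Billingsley theorem \cite{das-billingsley-packing-dimension}, combined with the faithfulness above, gives $\dim_P(T)=S$. On the other hand $\lambda\bigl(F_\xi(T)\bigr)=\mu_\xi(T)=1$, so $F_\xi(T)$ is a set of full Lebesgue measure and $\dim_P\bigl(F_\xi(T)\bigr)=1$. If $F_\xi$ is a $\mathit{PDP}$-transformation these two values must agree, forcing $S=1$.

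For sufficiency the first step is to rewrite $S=1$ as $\liminf_k\frac{D_1+\cdots+D_k}{b_1+\cdots+b_k}=0$ and to fix a subsequence $k_m\uparrow\infty$ realizing this liminf, so that $\sum_{j\le k_m}D_j=o(k_m)$. The decisive point is that separation from $0$ and $1$ converts this averaged control into a pointwise one: since all $p_{ij},q_{ij}$ lie in a fixed compact subinterval of $(0,1)$, Pinsker's inequality gives $\max_i\bigl|\ln(p_{ij}/q_{ij})\bigr|\le C\sqrt{D_j}$, whence by Cauchy--Schwarz $\sum_{j\le k_m}\max_i\bigl|\ln(p_{ij}/q_{ij})\bigr|\le C\sqrt{k_m}\,\sqrt{\sum_{j\le k_m}D_j}=o(k_m)$. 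Therefore the logarithms of the $p$-length and the $q$-length of \emph{every} rank-$k_m$ cylinder differ by $o(k_m)$, that is $s_{k_m}(x)\to1$ uniformly in $x$. Since $\dim_P$ is a $\limsup$ quantity, the values $\dim_P(E)$ and $\dim_P(F_\xi E)$ are both controlled on these ``good'' ranks, on which $F_\xi$ is, up to a subexponential factor, an isometry in the logarithmic scale; comparing the two packing pre-measures along $\{k_m\}$ then yields $\dim_P\bigl(F_\xi(E)\bigr)=\dim_P(E)$.

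I expect this last comparison to be the main obstacle. The packing dimension is built from a supremum over all fine packings and a $\limsup$ over \emph{all} scales, while the uniform estimate is available only along the possibly sparse subsequence $\{k_m\}$; at intermediate ranks the exponent $s_k(x)$ may fall on either side of $1$, so $F_\xi$ both stretches and contracts there, and a crude covering bound gives only $\dim_P(F_\xi E)\le\dim_P(E)/\inf_k s_k$, which is far too weak. The crux is to show that these two-sidedly distorted intermediate scales cannot raise the $\limsup$ defining the packing dimension of the image---equivalently, that the image $p$-cylinder system stays faithful and dimension-matched to the $q$-system. I would attack this by decomposing $E$ according to the values of $\liminf_k s_k$ and $\limsup_k s_k$ and estimating the packing pre-measure of each piece along $\{k_m\}$, transferring packings between the two systems by the uniform bound and using the separation hypotheses to tame the variation of diameters inside a single rank.
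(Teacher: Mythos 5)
A preliminary remark on the comparison itself: the paper never proves this statement. It is Li's theorem, reproduced from \cite{li_pdp} purely as motivation, and the paper's own main result (Theorem 1.2) replaces the hypothesis $\inf_{i,j}p_{ij}>0$ by the pair of conditions $\dim_P\mu_\xi=1$ and $B=0$. So your attempt can only be measured against the machinery the paper builds for that related result --- faithfulness of cylinder systems for packing dimension (its Theorem 5.1), the easy inequality $\dim_P(\cdot,\varPhi)\le\dim_{P(\mathit{unc})}(\cdot)$ (its Theorem 3.1), and a cylinder Billingsley theorem for $\dim_P$ --- not against a proof of the statement itself.

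Your necessity half is correct, and it parallels the opening step of the paper's own main proof: with $p_*,q_*>0$ the summands $-\ln p_{a_jj}$, $-\ln q_{a_jj}$ are independent and bounded, the strong law gives $s_k(x)-\frac{h_1+\cdots+h_k}{b_1+\cdots+b_k}\to0$ for $\mu_\xi$-a.e.\ $x$, Das's theorem \cite{das-billingsley-packing-dimension} (cylinders and balls being comparable since $q_*>0$) gives $\dim_P(T)=S:=\limsup_k\frac{h_1+\cdots+h_k}{b_1+\cdots+b_k}$ for the typical set $T$, and $\lambda(F_\xi(T))=\mu_\xi(T)=1$ forces $\dim_P(F_\xi(T))=1$; hence PDP implies $S=1$.

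The sufficiency half, however, is not merely incomplete: the obstacle you flagged is fatal, because the implication ``$S=1$, $p_*,q_*>0$ $\Rightarrow$ PDP'' is false, so no refinement of the subsequence argument can close the gap. Concretely, let $q_{0j}=q_{1j}=\frac{1}{2}$ for all $j$, partition $\mathbb{N}$ into consecutive blocks $J_1,J_2,\dots$ with $|J_1|+\cdots+|J_{m-1}|=o(|J_m|)$, and put $(p_{0j},p_{1j})=(\frac{1}{2},\frac{1}{2})$ for $j$ in odd blocks and $(p_{0j},p_{1j})=(\frac{1}{10},\frac{9}{10})$ for $j$ in even blocks. Then $p_*=\frac{1}{10}$, $q_*=\frac{1}{2}$; on odd blocks $h_j=b_j=\ln2$, on even blocks $h_j=H:=\frac{1}{10}\ln10+\frac{9}{10}\ln\frac{10}{9}$ and $b_j=\ln2$, so the entropy ratio tends to $1$ along the right endpoints of odd blocks and $S=1$. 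Now let $E$ be the set of points whose digit equals $0$ at every index of every odd block, equals $1$ at every second index of every even block, and is free at the remaining indices. Every rank-$k$ cylinder has length $2^{-k}$ and $E$ meets $2^{f_k}$ of them, where $f_k$ is the number of free indices up to $k$ and $\limsup_k f_k/k=\frac{1}{2}$; hence $\dim_P(E)\le\overline{\dim}_B(E)=\frac{1}{2}$. On the image side, let $\nu$ be the push-forward under $F_\xi$ of the measure on $E$ making the free digits i.i.d.\ with distribution $(\frac{1}{10},\frac{9}{10})$, and let $\varDelta'_k(y)$ be the rank-$k$ image cylinder containing $y$. For $\nu$-a.e.\ $y$, along the right endpoints $k$ of even blocks, $-\ln\nu(\varDelta'_k(y))\approx f_kH$ while $-\ln\lambda(\varDelta'_k(y))\approx f_kH+e_k\ln\frac{10}{9}$, where $e_k\approx f_k\approx\frac{k}{2}$ counts the forced-to-$1$ indices; so the upper local dimension of $\nu$ is at least $\frac{H}{H+\ln(10/9)}\approx0.755$ on a set of full $\nu$-measure, and the packing mass distribution principle (via Das's theorem, or via cylinder packings combined with the paper's Theorem 3.1) gives $\dim_P(F_\xi(E))\ge\frac{H}{H+\ln(10/9)}>\frac{1}{2}\ge\dim_P(E)$. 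Thus $F_\xi$ is not PDP although $S=1$: exactly the ``intermediate scales with two-sided distortion'' you were worried about decide the dimension.

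Two consequences worth recording. First, the statement as quoted must be a misstatement of \cite{li_pdp}: with $\liminf$ in place of $\limsup$ (equivalently, by your Gibbs-inequality remark, with the full limit equal to $1$) it becomes consistent with the paper's main theorem, whereas the $\limsup$ version contradicts it --- in the example above $\dim_P\mu_\xi=1$ but $B=\ln10>0$, so the paper's Theorem 1.2 declares $F_\xi$ not PDP while the quoted theorem declares it PDP. Second, your own strategy does prove sufficiency for the corrected statement: if the $\liminf$ equals $1$, then $\frac{1}{k}\sum_{j\le k}(b_j-h_j)\to0$ along the whole sequence, your Pinsker/Cauchy--Schwarz estimate then gives $s_k(x)\to1$ uniformly in $x$ over \emph{all} scales (no sparse subsequence, hence no bad intermediate ranks), the two Billingsley-type bounds pinch, and faithfulness of both cylinder systems (the paper's Theorem 5.1, applicable on both sides because $p_*,q_*>0$) yields PDP. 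Necessity for the corrected statement is then the genuinely harder half: one must show that $\liminf<1$ already destroys PDP, which requires a construction of the type displayed above rather than your typical-point argument.
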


In Remark 4.2 at the and of article \cite{li_pdp}, we read: ``The
conditions $\inf_{i,j} q_{ij}=\break q_*>0$ and $\inf_{i,j} p_{ij}=p_*>0$ play
an important role in the proof of the theorem. Open question: What can
we say about the topic if we remove these conditions?''

S. Albeverio, M. Pratsiovytyi, and G. Torbin \cite{apt2008}
removed the condition $\inf_{i,j} p_{ij}=p_*>0$ in a similar situation
for $\mathit{DP}$-transformations.

In case of packing dimension, the approach of \cite{apt2008} is
complicated because it requires appropriate results about the fine
packing system faithfulness for packing dimension calculation. Even the
definition of the fine packing system faithfulness is a problem because
centers of all balls in packings should be in the set the dimension of
which is calculated.

The aim of this paper is to propose some alternative definition of the
packing dimension, uncentered packing dimension or $\dim_{P(\mathit{unc})}$. In
the proposed definition, the condition ``the centers of balls should be
in the figure the dimension of which is calculated'' in the definition
of $\dim_P$ is replaced by ``every ball should have a nonempty
intersection with the figure.'' We prove that, in some wide class of
metric spaces (including $\mathbb{R}^n$), the value of packing
dimension with uncentered balls is matching to the value of classical
packing dimension. Introduction of the fine packing system faithfulness
notion is very simple in the case of proposed definition. It allows us
to prove faithfulness (under the condition of separation from zero of
the coefficients) of a $\tilde{Q}$-cylinder system and sufficient
conditions for the distribution function of a random variable with
independent $\tilde{Q}$-digits to be in the $\mathit{PDP}$-class. The
corresponding theorem is the main result of the paper.

\begin{theorem}
Let $\inf_{i,j} q_{ij}:=q_{\min}$. Suppose that $q_{\min}>0$. Let
\[
\begin{aligned} & T:= \biggl\{ k: k\in\mathbb{N}, p_k<\frac{q_{\min}}{2} \biggr\};
\\
& T_k:=T\cap\{1,2,\dots,k\};
\\
& B:=\limsup_{k\rightarrow\infty} \frac{\sum_{j\in T_k} \ln\frac{1}{p_j}}{k}. \end{aligned} %
\]

Let $F_\xi$ be the distribution function of a random variable \(\xi\)
with independent $\tilde{Q}$-representation. Then $F_\xi$ preserves the
packing dimension if and only if
\[
\left\{ %
\begin{aligned} &\dim_P \mu_\xi=1;
\\
&B=0. \end{aligned} %
\right.
\]
\end{theorem}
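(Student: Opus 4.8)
The plan is to reduce the problem to a comparison of cylinder-based packing dimensions and then to read off the two conditions from the asymptotics of the local distortion
\[
D_k(x)=\frac{\ln\mu_\xi(\Delta_k(x))}{\ln|\Delta_k(x)|}=\frac{\sum_{j=1}^{k}\ln p_{a_j(x),j}}{\sum_{j=1}^{k}\ln q_{a_j(x),j}},
\]
where $\Delta_k(x)=[a_1(x),\dots,a_k(x)]$ is the $k$th $\tilde{Q}$-cylinder containing $x$, with $|\Delta_k(x)|=\prod_{j\le k}q_{a_j,j}$ and $\mu_\xi(\Delta_k(x))=\prod_{j\le k}p_{a_j,j}$. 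The starting observation is that $F_\xi$ sends each cylinder $\Delta$ onto an interval of diameter $\mu_\xi(\Delta)$, so $F_\xi$ carries the $\tilde{Q}$-cylinder packing system onto its image packing system. Invoking the two tools prepared earlier --- the identity $\dim_{P(\mathrm{unc})}=\dim_P$ on $\mathbb{R}$ and the faithfulness of the $\tilde{Q}$-cylinder system for packing dimension calculation (valid since $q_{\min}>0$) --- I would compute $\dim_P(E)$ by $\tilde{Q}$-cylinder packings and $\dim_P(F_\xi(E))$ by their images. Preservation of packing dimension then becomes a question of whether replacing the scale $|\Delta|$ by the scale $\mu_\xi(\Delta)$ in the packing exponents changes the resulting dimension, that is, of how far $D_k$ may stay from $1$.

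For necessity I would argue the two conditions separately. Since $F_\xi$ is the distribution function, $(F_\xi)_*\mu_\xi$ is Lebesgue measure on $[0,1]$; hence any set $A$ of full $\mu_\xi$-measure is mapped to a set of full Lebesgue measure, which has packing dimension $1$. Choosing $A$ with $\dim_P A$ arbitrarily close to $\dim_P\mu_\xi$ and applying the $\mathit{PDP}$ identity forces $\dim_P\mu_\xi=1$. To obtain $B=0$, assuming $B>0$ I would build a set $E$ by forcing, at every position $j\in T$, the least-probable digit (the one realizing $p_j$) and leaving the other positions free in a ``typical'' way; the forced positions keep $|\Delta_k|$ comparable to $e^{-\Theta(k)}$ but shrink $\mu_\xi(\Delta_k)$ by the extra factor $\exp(-\sum_{j\in T_k}\ln(1/p_j))$. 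Computing both cylinder packing dimensions should then show that, along the subsequence realizing the $\limsup$ in $B$, the image dimension drops strictly below $\dim_P E$ by an amount controlled by $B$, contradicting $\mathit{PDP}$.

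For sufficiency I would split the positions as $\mathbb{N}=T\cup T^{c}$. For $j\notin T$ one has $p_j\ge q_{\min}/2$, and since $p_j$ is the smallest digit probability at level $j$ this bounds every digit probability there from below; together with $q_{\min}>0$ this makes the per-position distortion bounded on $T^{c}$. The condition $B=0$ gives $\sum_{j\in T_k}\ln(1/p_j)=o(k)$, and because $\ln|\Delta_k|=\Theta(k)$ uniformly (again using $q_{\min}>0$), the whole contribution of the $T$-positions to $\ln\mu_\xi(\Delta_k)$ is $o(\ln|\Delta_k|)$; thus the unbounded part of the distortion is asymptotically negligible and $D_k$ is governed by the $T^{c}$-positions alone. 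Feeding $\dim_P\mu_\xi=1$ into the faithful cylinder computation should then yield $\dim_P(F_\xi(E))=\dim_P(E)$ for every $E$.

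I expect the sufficiency to be the main obstacle, for two reasons. First, the $T$-positions produce genuinely unbounded distortion, so the estimate $\sum_{j\in T_k}\ln(1/p_j)=o(k)$ must be shown to make them negligible not merely along typical sequences but uniformly over all sets $E$ and all admissible packings in the $\limsup$ (packing) framework --- this is exactly where the uncentered formulation and the faithfulness result have to do their work. Second, $\dim_P\mu_\xi=1$ is a measure-theoretic statement about $\mu_\xi$-typical points, whereas $\mathit{PDP}$ demands preservation for arbitrary $E$; bridging this gap requires exploiting the $\inf$-over-countable-decompositions structure of $\dim_P$ to confine the atypical sequences (on which $D_k$ deviates from $1$) to pieces of strictly smaller dimension, so that they do not affect the top packing dimension of $F_\xi(E)$. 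Reconciling these two effects --- the unbounded but sparse $T$-distortion and the bounded but possibly biased $T^{c}$-distortion --- is the delicate heart of the argument.
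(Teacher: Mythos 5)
Your two necessity arguments are exactly the paper's: the image of any set of full $\mu_\xi$-measure has full Lebesgue measure, hence packing dimension $1$, which forces $\dim_P\mu_\xi=1$; and for $B>0$ the paper constructs precisely the set you describe (forcing the digit realizing $p_j=\min_i p_{ij}$ at each position of $T$, free digits elsewhere), obtaining a set $L$ with $\dim_P(L)=1$ but $\dim_P(F_\xi(L))\leqslant\frac{1}{1+B}<1$ along the subsequence realizing the limsup in $B$. Your sufficiency outline also follows the paper's route in spirit: show that the distortion $D_k(x)\to 1$ (the paper splits positions into $T^+_{\varepsilon,k}$, $T_{\varepsilon,k}$ and $T_k$ and estimates the three corresponding terms), then transfer the dimension through cylinder families via a Billingsley-type theorem for packing dimension together with faithfulness of those families.

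However, your sufficiency step has a genuine gap at its decisive point. You propose to compute $\dim_P(F_\xi(E))$ by packings with image cylinders, justified by ``the faithfulness of the $\tilde{Q}$-cylinder system (valid since $q_{\min}>0$).'' That faithfulness theorem applies only to the domain family $\varPhi$. The image family $\varPhi'=F_\xi(\varPhi)$ is the cylinder family of a new $\tilde{Q}'$-expansion whose lengths are $q'_{ij}=p_{ij}$, and $\inf_{i,j}p_{ij}$ is \emph{not} assumed positive --- removing exactly that hypothesis (Li's condition $p_*>0$) is the whole point of the theorem. Since the family-restricted packing dimension always satisfies $\dim_P(F_\xi(E),\varPhi')\leqslant\dim_P(F_\xi(E))$, without faithfulness of $\varPhi'$ your computation only yields $\dim_P(E)\leqslant\dim_P(F_\xi(E))$, and you cannot exclude that the image has strictly larger dimension. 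The paper closes this hole with two dedicated lemmas: Lemma~\ref{first-lemma} gives a sufficient condition for faithfulness with no separation-from-zero assumption (namely $\ln q_{i_k k}/\ln(q_{i_1 1}\cdots q_{i_{k-1}(k-1)})\to 0$), proved by a separate premeasure estimate on the sets $W_{m,\delta}$, and the subsequent lemma verifies this condition for $\varPhi'$ using precisely the distortion limit \eqref{ln_ratio} established from $B=0$ and $\dim_P\mu_\xi=1$. Your closing remark locates the difficulty in uniformity over sets and packings, but the missing idea is different and more specific: a faithfulness criterion for the image family that survives $\inf p_{ij}=0$. Without an analogue of these two lemmas, your argument does not close.
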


\section{Packing dimension}
Let us recall the definition of packing dimension in the form given,
for example, in \cite{falconer_fractal_geometry}.

\begin{definition}
Let $E\subset M$ and $\varepsilon>0$. A finite or countable family $\{
E_j\}$ of open balls is called an \textit{$\varepsilon$-packing} of a
set $E$ if\vadjust{\eject}

\begin{enumerate}
\item$|E_i|\leqslant\varepsilon$ for all $i$;
\item$c_i \in E,\  i\in\mathbb{N}$, where $c_i$ is the center of the
ball $E_i$;
\item$E_i \cap E_j = \varnothing$ for all $i,j$, $i \neq j$.
\end{enumerate}
\end{definition}

\begin{remark}
The empty set of balls is a packing of any set.
\end{remark}

\begin{definition}
Let $E\subset M$, $\alpha\geqslant0$, $\varepsilon>0$. Then the \textit
{$\alpha$-dimensional packing premeasure} of a bounded set $E$ is
defined by
\[
\mathcal{P}^\alpha_{\varepsilon}(E):=\sup \biggl\{ \sum
_i |E_i|^\alpha \biggr\},
\]

\noindent where the supremum is taken over all at most countable
$\varepsilon$-packings $ \{ E_j  \}$ of $E$ (if $E_j =
\varnothing$ for all $j$, then $\mathcal{P}^\alpha_{\varepsilon}(E)=0$).
\end{definition}

\begin{definition}
The \textit{$\alpha$-dimensional packing quasi-measure} of a set $E$ is
defined by

\[
\mathcal{P}^\alpha_{0}(E):=\lim_{\varepsilon\rightarrow0}
\mathcal {P}^\alpha_{\varepsilon}(E).
\]
\end{definition}

\begin{definition}
The \textit{$\alpha$-dimensional packing measure} is defined by
\[
\mathcal{P}^\alpha_{}(E):=\inf \biggl\{ \sum
_j \mathcal{P}^\alpha_{0}(E_j):
E\subset\bigcup E_j \biggr\},
\]

\noindent where the infimum is taken over all at most countable
coverings $ \{ E_j  \}$ of $E$, $E_j\subset\mathbf{M}$.
\end{definition}

\begin{definition}
The nonnegative number
\[
\dim_{P}(E): =\inf\bigl\{\alpha: \mathcal{P}^\alpha_{}(E)=0
\bigr\}
\]
is called the \textit{uncentered packing dimension} of a set $E \subset M$.
\end{definition}

\section{Uncentered packing dimension}

\begin{definition}
Let $E\subset M$ and $\varepsilon>0$. A finite or countable family $\{
E_j\}$ of open balls is called an \textit{uncentered $\varepsilon
$-packing} of a set $E$ if

\begin{enumerate}
\item$|E_i|\leqslant\varepsilon$ for all $i$;
\item$E_i \cap E\neq\varnothing$;
\item$E_i \cap E_j = \varnothing$ for all $i,j$, $i \neq j$.
\end{enumerate}
\end{definition}

\begin{remark}
The empty set of balls is an uncentered packing of any set.
\end{remark}

\begin{definition}
Let $E\subset M$, $\alpha\geqslant0$, $\varepsilon>0$. Then the \textit
{uncentered $\alpha$-dimensional packing premeasure} of a bounded set
$E$ is defined by
\[
\mathcal{P}^\alpha_{\varepsilon(\mathit{unc})}(E):=\sup \biggl\{ \sum
_i |E_i|^\alpha \biggr\},
\]

\noindent where the supremum is taken over all at most countable
uncentered $\varepsilon$-packings $ \{ E_i  \}$ of $E$.
\end{definition}

\begin{definition}
The \textit{uncentered $\alpha$-dimensional packing quasi-measure} of
a~set~$E$ is defined by

\[
\mathcal{P}^\alpha_{0(\mathit{unc})}(E):=\lim_{\varepsilon\rightarrow0}
\mathcal {P}^\alpha_{\varepsilon(\mathit{unc})}(E).
\]
\end{definition}

\begin{definition}
\textit{Uncentered $\alpha$-dimensional packing measure} is defined by
\[
\mathcal{P}^\alpha_{(\mathit{unc})}(E):=\inf \biggl\{ \sum
_j \mathcal{P}^\alpha_{0(\mathit{unc})}(E_j):
E\subset\bigcup E_j \biggr\},
\]

\noindent where the infimum is taken over all at most countable
coverings $ \{ E_j  \}$ of $E$, $E_j\subset\mathbf{M}$.
\end{definition}

\begin{remark}
If $(M,\rho)=\mathbb{R}^1$ and $\alpha=1$, then the $\alpha
$-dimensional packing measure and uncentered $\alpha$-dimensional
packing measure are the Lebesgue measure.
\end{remark}

\begin{definition}
The nonnegative number
\[
\dim_{P(\mathit{unc})}(E): =\inf\bigl\{\alpha: \mathcal{P}^\alpha_{(\mathit{unc})}(E)=0
\bigr\}.
\]
is called the \textit{uncentered packing dimension} of a set $E \subset M$.
\end{definition}

\begin{theorem}
Let $(M,\rho)$ be a metric space. Let $C \in\mathbb{N}$. If for all $
r>0$ and for any open ball $I$ with $|I|=8r$, there exist at most
$N(I)$ balls $I_i,\ i \in\{1,\dots,~N(I)\}$ such that $I_i\subset I,\
i\in\{1,\dots,N(I)$, $|I_i|=r,~ i \in\{1,\dots,N(I)\}$, and $~N(I)
\leq C $. Then
\[
\dim_{P(\mathit{unc})}(E)=\dim_P(E).
\]
\end{theorem}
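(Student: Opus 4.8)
The inequality $\dim_{P(\mathit{unc})}(E)\ge\dim_P(E)$ is immediate: every centered $\varepsilon$-packing is in particular an uncentered one (a ball whose center lies in $E$ certainly meets $E$), so $\mathcal{P}^\alpha_{\varepsilon}(E)\le\mathcal{P}^\alpha_{\varepsilon(\mathit{unc})}(E)$ for all $\alpha,\varepsilon$, and this inequality propagates through the quasi-measure, through the covering-infimum defining the measure, and finally through the dimension. So the whole content is the reverse inequality $\dim_{P(\mathit{unc})}(E)\le\dim_P(E)$. Since both packing measures are built from the premeasures by the same two operations (the limit $\varepsilon\to0$ and the infimum over countable coverings), the plan is to reduce everything to producing a constant $K=K(\alpha,C)$, independent of the set, with
\[
\mathcal{P}^\alpha_{\varepsilon(\mathit{unc})}(F)\le K\,\mathcal{P}^\alpha_{2\varepsilon}(F)\qquad\text{for every bounded }F\subset M.
\]
Indeed, letting $\varepsilon\to0$ gives $\mathcal{P}^\alpha_{0(\mathit{unc})}(F)\le K\,\mathcal{P}^\alpha_{0}(F)$; applying this to each piece of a covering and taking the infimum gives $\mathcal{P}^\alpha_{(\mathit{unc})}(E)\le K\,\mathcal{P}^\alpha(E)$; and a multiplicative bound preserves vanishing, so $\mathcal{P}^\alpha(E)=0$ forces $\mathcal{P}^\alpha_{(\mathit{unc})}(E)=0$, whence $\dim_{P(\mathit{unc})}(E)\le\alpha$ for every $\alpha>\dim_P(E)$.

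For the displayed premeasure comparison I would fix an uncentered $\varepsilon$-packing $\{E_i\}$ of $F$, choose $c_i\in E_i\cap F$ for each $i$, and \emph{re-center}: replace $E_i=B(x_i,r_i)$ by $F_i:=B(c_i,r_i)$ of the same radius. Then $c_i\in F$ and $|F_i|=|E_i|$, so the $F_i$ are candidates for a centered packing, the only defect being that they need not be pairwise disjoint. The hypothesis is tailored to control this defect at a fixed scale. If two re-centered balls $F_i,F_j$ of one dyadic size-class (radii in $(r/2,r]$) meet, then $\rho(c_i,c_j)<2r$, hence $x_i\in B(c_j,3r)$ and the original (disjoint) ball $E_i\subset B(c_j,4r)$, a ball of diameter $8r$; the concentric sub-balls $B(x_i,r/2)$ are disjoint, of diameter $r$, and contained in that $8r$-ball, so by the hypothesis at most $C$ of them occur. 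Thus within one size-class the re-centered family has intersection degree at most $C-1$, and a greedy (graph-coloring) selection keeps a disjoint subfamily carrying at least a $1/C$ fraction of the $\alpha$-mass of that class. This is exactly where the numbers $8r$, ``diameter $r$'', and $N(I)\le C$ of the statement enter verbatim.

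The genuinely delicate point — which I expect to be the main obstacle — is to assemble these per-scale disjoint subfamilies into a \emph{single} disjoint centered packing without losing more than a constant factor of $\sum_i|E_i|^\alpha$: balls kept at different dyadic scales may still overlap, and a naive union can have unbounded multiplicity, since a common point may be covered by one ball from each of arbitrarily many scales. I would attempt to resolve this by a Vitali-type selection that processes the re-centered balls in order of decreasing radius, retaining a ball only when it is disjoint from all balls already retained, and charging each discarded $E_i$ to the first retained $F_j$ (necessarily $r_j\ge r_i$) that blocks it. The same geometric estimate as above shows that every ball charged to a fixed $F_j$ lies in $B(c_j,4r_j)$ and that these charged balls are mutually disjoint, so their total $\alpha$-mass equals the uncentered premeasure of $F$ restricted to that $8r_j$-ball. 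The crux is then to bound this localized mass by a constant multiple of $|F_j|^\alpha$, \emph{uniformly in the scale}; the single-scale count $N(I)\le C$ does not by itself suffice (for $\alpha$ below the growth exponent, disjoint balls of all smaller sizes can accumulate), so this local bound must be obtained by iterating the $8r\!\to\!r$ estimate across scales and is where the real work lies. An alternative route that side-steps the cross-scale overlap is to pass through Tricot's characterization of packing dimension as a modified upper box dimension, where the comparison reduces to one between centered and uncentered packing \emph{numbers} at a single scale $\delta$ — a comparison that the multiplicity bound above settles cleanly — at the cost of first establishing that $\dim_{P(\mathit{unc})}$ admits the same characterization. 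Once the uniform local estimate is in hand, summing over the retained balls (which form a centered $2\varepsilon$-packing) yields the displayed inequality with $K=K(\alpha,C)$, completing the reduction and hence the theorem.
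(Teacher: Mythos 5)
Your easy direction and your single-scale multiplicity bound are both correct, and the latter is essentially the same geometric computation the paper uses (re-center at a point of $E\cap E_i$, note that two intersecting re-centered balls force the original disjoint balls into a ball of diameter $8r$, and invoke the hypothesis to cap the multiplicity at $C$). The genuine gap is precisely the step you flag yourself and never close: the cross-scale assembly. Your plan rests on the uniform comparison $\mathcal{P}^\alpha_{\varepsilon(\mathit{unc})}(F)\leqslant K(\alpha,C)\,\mathcal{P}^\alpha_{2\varepsilon}(F)$, and neither of your two routes to it is carried out. Worse, the specific mechanism you propose cannot be repaired: in the Vitali-type charging, the balls charged to a retained ball $F_j=B(c_j,r_j)$ are disjoint balls contained in $B(c_j,5r_j)$, and for $\alpha$ below the growth exponent of the space their total $\alpha$-mass admits no bound of the form $K|F_j|^\alpha$ --- in $\mathbb{R}^n$ one can place $\sim(r_j/\delta)^n$ disjoint balls of radius $\delta$ near the boundary of $F_j$, all of whose re-centerings meet $F_j$, with total $\alpha$-mass $\sim r_j^{\,n}\delta^{\alpha-n}\to\infty$ as $\delta\to0$. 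So the ``localized bound uniform in the scale'' you hope to extract by iterating the $8r\to r$ estimate is false as a geometric statement, and the Tricot/box-dimension detour is only named, not proved. As it stands, the hard inequality $\dim_{P(\mathit{unc})}(E)\leqslant\dim_P(E)$ is not established.

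The paper's proof supplies exactly the missing idea: do not compare the measures by a constant at one exponent; compare the dimensions using \emph{two} exponents. Fix $t<s<\dim_{P(\mathit{unc})}(E)$. Since $\mathcal{P}^s_{r(\mathit{unc})}(E)=+\infty$ for every $r$, there is an uncentered packing with $\sum_i|E_i|^s>1$; split it into dyadic classes $V_k=\{E_i:2^{-k-1}\leqslant|E_i|<2^{-k}\}$ and note that if every class had fewer than $2^{kt}(1-2^{t-s})$ balls, the total $s$-mass would be below $(1-2^{t-s})\sum_k(2^{t-s})^k=1$. The convergence of this geometric series --- bought precisely by the gap between $t$ and $s$ --- is what replaces your divergent sum over scales: it pigeonholes the mass into a \emph{single} class $V_{k_0}$. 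All balls of that class are re-centered at the one common radius $r=2^{-k_0-1}$, so cross-scale overlaps never arise, and your own multiplicity argument then yields a centered packing with $t$-mass at least $(1-2^{t-s})/C$, a scale-free constant. Since the initial packing can be taken with arbitrarily small diameters, this gives $\mathcal{P}^t_0(E)\geqslant(1-2^{t-s})/C$, and countable stability of $\dim_{P(\mathit{unc})}$ (applied to an arbitrary covering $\{E_j\}$, one member of which must still have uncentered dimension $>s$) upgrades this to $\mathcal{P}^t(E)>0$, hence $\dim_P(E)\geqslant t$ for every $t<\dim_{P(\mathit{unc})}(E)$. The price is that the lower bound degenerates as $t\uparrow s$; that is harmless for a dimension statement but fatal for the measure-comparison statement you tried to prove, which is exactly why your formulation got stuck.
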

\begin{proof}

\textit{Step 1.} Let us prove the inequality $\dim_{P(\mathit{unc})}(E)\geqslant
\dim_P(E)$.

By the definitions and supremum property we have
\[
\mathcal{P}^\alpha_{r(\mathit{unc})}(E)\geqslant\mathcal{P}^\alpha_r(E).
\]

By the limit property of inequalities we have
\[
\mathcal{P}^\alpha_{0(\mathit{unc})}(E)\geqslant\mathcal{P}^\alpha_0(E).
\]

Hence,
\[
\mathcal{P}^\alpha_{(\mathit{unc})}(E)\geqslant\mathcal{P}^\alpha(E).
\]
Let $\dim_{P(\mathit{unc})}(E)=\alpha_0$. By the definition of $\dim
_{P(\mathit{unc})}(E)$ we have
\[
\forall\varepsilon>0,\quad\mathcal{P}^{\alpha_0+\varepsilon}_{(\mathit{unc})}(E)=0.
\]
Therefore,
\[
\forall\varepsilon>0,\quad\mathcal{P}^{\alpha_0+\varepsilon}_0(E)=0,
\]
and, consequently,
\[
\dim_P(E)\leqslant\alpha_0.
\]
Hence, it follows that $\dim_{P(\mathit{unc})}(E)\geqslant\dim_P(E)$, which is
our claim.

\textit{Step 2.} Let us show that $\dim_{P(\mathit{unc})}(E)\leqslant\dim_P(E)$.

If $\dim_{P(\mathit{unc})}(E)=0$, then the statement is true.

Let us consider the case $\dim_{P(\mathit{unc})}(E)\neq0$. Fix $0<t<s<\dim_{P(\mathit{unc})}(E)$.

Since
$
s<\dim_{P(\mathit{unc})}(E),
$
we have
\[
\begin{aligned} &\mathcal{P}^s_{(\mathit{unc})}(E)=+\infty,
\\
&\mathcal{P}^s_{0(\mathit{unc})}(E)=+\infty. \end{aligned} %
\]
Therefore,
\[
\forall r>0,\ \mathcal{P}^s_{r(\mathit{unc})}(E)=+\infty.
\]
From this and from the supremum property, it follows that there is an
uncentered packing $V:=\{E_i\}$ of the set $E$ with
\begin{equation}
\label{eq:dimp_unc_equal_to_dimp_first} \sum_i |E_i|^s>1.
\end{equation}
Let us divide the packing $V$ into classes
\[
V_k:= \bigl\{ E_i: 2^{-k-1}
\leqslant|E_i| < 2^{-k} \bigr\}.
\]
Let $n_k$ be the number of balls $V_k$. We will show that
\[
\exists k_0: n_{k_0} \geqslant2^{k_0 t}
\bigl(1-2^{t-s}\bigr).
\]

To obtain a contradiction, suppose that
\[
n_k < 2^{kt}\bigl(1-2^{t-s}\bigr)\quad\mbox{for
all } k.
\]
Then
\[
\sum_i |E_i|^s<\sum
_k 2^{-ks}\cdot n_k<\sum
_k 2^{-ks} \cdot2^{kt}
\bigl(1-2^{t-s}\bigr)=\bigl(1-2^{t-s}\bigr) \cdot\sum
_k \bigl(2^{t-s}\bigr)^k=1,
\]
which contradicts our assumption \eqref{eq:dimp_unc_equal_to_dimp_first}.

Therefore, such $k_0$ exists. Let us consider $V_{k_0}$. We denote by
$A_1, A_2, \dots, A_{n_{k_0}}$ the balls in $V_{k_0}$, that is,
\[
V_{k_0}= \{ A_1, A_2, \dots, A_{n_{k_0}}
\}.
\]

Fix $r:=2^{-k_0-1}$. Then the radius of any $A_i$ is less than $r$.
Let $T_i$ be a point of $A_i$ such that $T_i \in A_i \cap E$. Let $V'$
be the set of balls with the centers $T_i$ and radius $r$, that is,
\[
V'=\bigl\{A'_i: A'_i=B(T_i,r)
\bigr\}.
\]
Fix
\[
V^*=\bigl\{A^*_i: A^*_i=B(T_i,4r)\bigr\}.
\]


Let us divide the set $V'$ into classes $K_1, K_2, \dots, K_l$ as follows.

\begin{enumerate}
\item Let us take a ball \mbox{$A'_{j_1}=A'_1$} and put it in $K_1$ together
with all other balls $A'_{i} \in V'$ such that $A'_{i} \cap A'_{j_1}
\neq\varnothing$.
\item Let us take an arbitrary ball $A'_{j_2} \in V'\setminus K_1$ and
put it in $K_2$ together with all other balls $A'_{i} \in V'\setminus
K_1$ such that $A'_{i} \cap A'_{j_2} \neq\varnothing$.
%
\item Let us continue this way until $V'\setminus(K_1 \cup K_2 \cup
\cdots\cup K_l) \neq\varnothing$. Since the number of elements in a
set $V'$ is a finite, we can find such a number $l$.
\end{enumerate}

Now suppose that the balls $A'_i$ and $A'_j$ intersect each other. In
other words,\break $\rho(T_i,T_j)\leqslant2r$. Therefore, $A_j \subset A^*_i$.

The radius of $A_j$ is greater than $r/2$. By the theorem condition,
there are no more than $C$ disjoint balls with radius $r/2$ in a ball
with radius $4r$

Therefore, there are no more than $C$ balls in any class $K_i$.

Moreover, in the case $i<m$, the balls $A'_{j_i}$ and $A'_{j_m}$ do not
intersect each other. Indeed,
suppose otherwise. Then $A'_{j_m}$ is in a class $K_i$ or in a class
with number less than $i$.

Hence,
\[
V''= \bigl\{ A'_{j_1},
A'_{j_2}, \dots, A'_{j_l} \bigr\}
\]
is a centered packing of a set $E$, and the $t$-volume of this packing
is less than the $t$-volume of the uncentered packing $V_{k_0}$ no more
than $C$ times.
Therefore,
\[
\sum_{V''} \big|A'_{j_i}\big|^t
\geqslant n_{k_0}\cdot\frac{2^{-k_0 t}}{C} \geqslant2^{k_0 t}
\bigl(1-2^{t-s}\bigr)\cdot\frac{2^{-k_0 t}}{C} = \frac{1-2^{t-s}}{C}.
\]
From this it follows that
\[
\mathcal{P}^t_{2^{-k_0}}(E)\geqslant\frac{1-2^{t-s}}{C}.
\]

By the inequality $2^{-k_0}<r$ we get
\[
\mathcal{P}^t_{r}(E)\geqslant\frac{1-2^{t-s}}{C}\quad
\mbox{for all } r>0.
\]

Consequently, as $r\to0$, we get the inequality
\[
\mathcal{P}^t_{0}(E)\geqslant\frac{1-2^{t-s}}{C}.
\]

Let us show that $\mathcal{P}^t(E)\geqslant\frac{1-2^{t-s}}{C}$. Recall
the definition

\[
\mathcal{P}^t(E)=\inf \biggl\{ \sum_j
\mathcal{P}^t_{0}(E_j): E\subset\bigcup
E_j \biggr\},
\]
where the infimum is taken over all at most countable coverings $E_j$
of a set $E$.

Let $\{E_j\}$ be an at most countable covering of $E$. Since $\dim
_{P(\mathit{unc})}(E)>s$, there is $j_0$ such that $\dim_{P(\mathit{unc})}(E_{j_0})>s$
(by the countable stability of the packing dimension $\dim_{P(\mathit{unc})}$).
In other words, we have
\[
\begin{aligned} &\mathcal{P}^s_{(\mathit{unc})}(E_{j_0})=+
\infty,
\\
&\mathcal{P}^s_{0(\mathit{unc})}(E_{j_0})=+\infty.
\end{aligned} %
\]
We conclude by the part of the theorem already proved for $E$ that
\[
\mathcal{P}^t_{0}(E_{j_0})\geqslant
\frac{1-2^{t-s}}{C}
\]
and
\[
\sum_j \mathcal{P}^t_{0}(E_j)
\geqslant\frac{1-2^{t-s}}{C}.
\]
But the previous inequality is true for an arbitrary covering $\{E_j\}$
of a set $E$ and for the infimum for all coverings. Therefore,
\[
\mathcal{P}^t(E)\geqslant\frac{1-2^{t-s}}{C}
\]
and
\[
\dim_P(E)\geqslant t.
\]
Since $t$-$\dim_{P(\mathit{unc})}(E)$ can be approximated by 0, we get $\dim
_P(E) \geqslant\dim_{P(\mathit{unc})}(E)$, which completes the proof.
\end{proof}

\begin{corollary}
If $M=\mathbb{R}^n$, then $\dim_{P(\mathit{unc})}(E)=\dim_P(E)$.
\end{corollary}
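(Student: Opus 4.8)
The plan is to derive the corollary directly from the preceding theorem by verifying that $\mathbb{R}^n$, equipped with its Euclidean metric, meets the hypothesis for a suitable constant $C$. Recall that the theorem requires a single $C\in\mathbb{N}$ such that, for every $r>0$, any open ball $I$ of diameter $|I|=8r$ (hence of radius $4r$) contains at most $C$ pairwise disjoint balls $I_i$ of diameter $|I_i|=r$ (hence of radius $r/2$). Once such a $C$ is exhibited, the equality $\dim_{P(\mathit{unc})}(E)=\dim_P(E)$ for all $E\subset\mathbb{R}^n$ is immediate from the theorem.

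To produce $C$, I would exploit the scaling and translation invariance of Lebesgue measure $\lambda$ on $\mathbb{R}^n$. Denote by $\omega_n$ the Lebesgue measure of the unit ball, so that an open ball of radius $\varrho$ has measure $\omega_n\varrho^n$. Suppose $I_1,\dots,I_N$ are pairwise disjoint balls of radius $r/2$ with $I_i\subset I$, where $I$ has radius $4r$. By disjointness and monotonicity of $\lambda$,
\[
\sum_{i=1}^N \lambda(I_i)=\lambda\Biggl(\bigcup_{i=1}^N I_i\Biggr)\leqslant\lambda(I).
\]
Since $\lambda(I_i)=\omega_n (r/2)^n$ and $\lambda(I)=\omega_n (4r)^n$, this gives $N\,\omega_n (r/2)^n\leqslant\omega_n(4r)^n$, that is, $N\leqslant (4r)^n/(r/2)^n=8^n$. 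The bound $N\leqslant 8^n$ thus holds for every $r>0$ and every such configuration, independently of $r$.

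Setting $C:=8^n$, the hypothesis of the theorem is satisfied on $\mathbb{R}^n$, and the theorem yields $\dim_{P(\mathit{unc})}(E)=\dim_P(E)$ for all $E\subset\mathbb{R}^n$, which is the assertion of the corollary.

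I do not anticipate any genuine obstacle here: the argument is a routine volume comparison. The only points demanding care are the bookkeeping between diameters and radii (the quantities $|I|=8r$ and $|I_i|=r$ denote diameters, so the corresponding radii are $4r$ and $r/2$), and the tacit use of the Euclidean metric, for which balls are genuine round balls and Lebesgue measure scales as the $n$-th power of the radius. If $\mathbb{R}^n$ were instead endowed with a merely bi-Lipschitz-equivalent metric, one would replace the exact volume computation by comparison constants, but a uniform bound $C$ would persist, so the conclusion would be unaffected.
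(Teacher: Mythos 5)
Your proof is correct and follows essentially the same route as the paper: a Lebesgue-measure volume comparison showing that at most $C=8^n$ disjoint balls of the smaller size fit inside the larger ball, after which the preceding theorem gives the conclusion. Your version is slightly more careful than the paper's (you track the diameter-versus-radius convention of the theorem's hypothesis and write out the disjointness inequality explicitly), but the underlying argument is identical.
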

\begin{proof}
Let $B_{8r}$ be a ball with radius $8r$, $B_r$ be a ball with radius
$r$, and $\lambda$ be the $n$-dimensional Lebesgue measure. Then
\[
\lambda(B_{8r})=8^n\cdot\lambda(B_r).
\]

Therefore, we can put no more than $C=8^n$ disjoint balls with radii
$r$ in a~ball with radius $8r$, which completes the proof.
\end{proof}

\subsection{Packing dimension with respect to the family of sets}
Let $\varPhi$ be a family of balls in a metric space $(M, \rho)$.

\begin{definition}
Let $E\subset M$, $\alpha\geqslant0$, $\varepsilon>0$. Then the \textit
{$\alpha$-dimensional packing premeasure} of a bounded set $E$ with
respect to $\varPhi$ is defined by

\[
\mathcal{P}^\alpha_{\varepsilon}(E,\varPhi):=\sup \biggl\{ \sum
_i |E_i|^\alpha \biggr\},
\]

\noindent where the supremum is taken over all uncentered
$\varepsilon$-packings $ \{ E_i  \} \subset\varPhi$ of $E$ (if
$\{E_i\} = \varnothing$, then $\mathcal{P}^\alpha_{\varepsilon}(E,\varPhi)=0$).
\end{definition}

\begin{definition}
The \textit{$\alpha$-dimensional packing quasi-measure} of a set $E$
w.r.t.~$\varPhi$ is defined by
\[
\mathcal{P}^\alpha_{0}(E,\varPhi):=\lim_{\varepsilon\rightarrow0}
\mathcal {P}^\alpha_{\varepsilon}(E,\varPhi).
\]
\end{definition}

\begin{definition}
The \textit{$\alpha$-dimensional packing measure} w.r.t.\ $\varPhi$ is
defined by
\[
\mathcal{P}^\alpha(E,\varPhi):=\inf \biggl\{ \sum
_j \mathcal{P}^\alpha_{0}(E_j,
\varPhi): E\subset\bigcup E_j \biggr\},
\]

\noindent where the infimum is taken over all at most countable
coverings $ \{ E_j  \}$ of $E$, $E_j\subset\mathbf{M}$.\vadjust{\eject}
\end{definition}

\begin{definition}
The nonnegative number
\[
\dim_{P}(E,\varPhi): =\inf\bigl\{\alpha: \mathcal{P}^\alpha(E,
\varPhi)=0\bigr\}
\]
is called the \textit{packing dimension} of a set $E \subset M$ w.r.t.
$\varPhi$.
\end{definition}

\begin{remark}
In the definition of $\dim_P(E,\varPhi)$, we used uncentered packing. But
we will denote this dimension without index $(\mathit{unc})$ because:
\begin{enumerate}
\item We will work in $\mathbb{R}^n$. In this space, centered and
uncentered packing dimensions are equal;
\item The centered packing dimension w.r.t. some family of balls is not defined.
\end{enumerate}
\end{remark}

\begin{theorem}
\label{dimpfi_leq_dimp}
\[
\dim_{P}(E,\varPhi)\leqslant\dim_{P(\mathit{unc})}(E).
\]
\end{theorem}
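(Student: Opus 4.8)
The plan is to follow the structure of \emph{Step 1} of the preceding proof, since this is the ``easy'' direction: passing from arbitrary balls to the subfamily $\varPhi$ can only shrink the supremum defining the premeasure, so every quantity downstream can only decrease. The whole argument is therefore a monotonicity chain pushed through the layered definitions premeasure $\to$ quasi-measure $\to$ measure $\to$ dimension.

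First I would record the one inequality that carries all the content. Every uncentered $\varepsilon$-packing $\{E_i\}\subset\varPhi$ of $E$ is, in particular, an uncentered $\varepsilon$-packing of $E$ by arbitrary balls (conditions 1--3 in the definition of an uncentered $\varepsilon$-packing are exactly those imposed on a $\varPhi$-packing). Hence the supremum defining $\mathcal{P}^\alpha_{\varepsilon}(E,\varPhi)$ is taken over a subcollection of the packings admitted for $\mathcal{P}^\alpha_{\varepsilon(\mathit{unc})}(E)$, which gives
\[
\mathcal{P}^\alpha_{\varepsilon}(E,\varPhi)\leqslant \mathcal{P}^\alpha_{\varepsilon(\mathit{unc})}(E)
\]
for every $\alpha\geqslant 0$ and every $\varepsilon>0$.

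Next I would transport this inequality through the remaining definitions. Letting $\varepsilon\to 0$ preserves the inequality (limit property of inequalities), so $\mathcal{P}^\alpha_{0}(E,\varPhi)\leqslant \mathcal{P}^\alpha_{0(\mathit{unc})}(E)$. Applying this bound to each member $E_j$ of an arbitrary at most countable covering $\{E_j\}$ of $E$ and summing term by term yields $\sum_j \mathcal{P}^\alpha_{0}(E_j,\varPhi)\leqslant \sum_j \mathcal{P}^\alpha_{0(\mathit{unc})}(E_j)$; since both packing measures take the infimum over the \emph{same} family of coverings of $E$, this gives
\[
\mathcal{P}^\alpha(E,\varPhi)\leqslant \mathcal{P}^\alpha_{(\mathit{unc})}(E).
\]
Finally I would translate to dimensions exactly as in Step 1. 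Put $\alpha_0:=\dim_{P(\mathit{unc})}(E)$. For every $\varepsilon>0$ we have $\mathcal{P}^{\alpha_0+\varepsilon}_{(\mathit{unc})}(E)=0$, whence, by the displayed chain and the nonnegativity of the premeasures, $\mathcal{P}^{\alpha_0+\varepsilon}(E,\varPhi)=0$. Thus $\dim_{P}(E,\varPhi)\leqslant \alpha_0+\varepsilon$ for all $\varepsilon>0$, and letting $\varepsilon\to 0$ gives $\dim_{P}(E,\varPhi)\leqslant \alpha_0=\dim_{P(\mathit{unc})}(E)$.

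I do not expect a genuine obstacle here; the only point demanding care is the very first inequality, namely verifying that a $\varPhi$-packing really does qualify as an unrestricted uncentered packing, so that the restricted supremum is dominated by the unrestricted one. Once that is in place, everything else is the routine propagation of a single monotonicity statement through limits, sums, and infima, mirroring Step 1 of Theorem~1.
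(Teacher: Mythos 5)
Your proof is correct and follows essentially the same route as the paper: the key point in both is that $\varPhi$-packings form a subfamily of all uncentered packings (the paper phrases this as $\varPhi\subseteq\varPhi_0$ plus the supremum property), after which the inequality propagates through quasi-measure, measure, and dimension. You spell out this propagation explicitly, whereas the paper compresses it into one sentence, but the content is identical.
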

\begin{proof}
Let $\varPhi_0$ be the family of all open balls of $M$. Then
\[
\mathcal{P}^\alpha_{r(\mathit{unc})}(E)=\mathcal{P}^\alpha_{r}(E,
\varPhi_0).
\]

\noindent Since $\varPhi\subseteq\varPhi_0$, by the supremum property we have
\[
\mathcal{P}^\alpha_{r}(E,\varPhi)\leqslant
\mathcal{P}^\alpha_{r}(E,\varPhi_0).
\]

\noindent By the inequality for packing premeasures it follows that
\[
\dim_{P}(E,\varPhi)\leqslant\dim_{P(\mathit{unc})}(E),
\]
which proves the theorem.
\end{proof}

\section{Faithfulness of the open balls families for packing dimension
calculation}

\begin{definition}
Suppose that some open balls family $\varPhi$ satisfies the following condition:
for all $E \subset M$, $\dim_{P(\mathit{unc})}(E,\varPhi)=
\dim_{P(\mathit{unc})}(E).
$
Then $\varPhi$ is said to be \emph{faithful for uncentered packing
dimension calculation}.
\end{definition}

\begin{remark}
The notion of faithfulness is introduced for the Hausdorff--Besicovitch
dimension $\dim_H$ \cite{nikiforov-torbin-tvims}.
It is clear that
\[
\forall\varPhi\subset2^M, \dim_H(E,\varPhi)\geqslant
\dim_H(E).
\]
\end{remark}

\begin{theorem}[The sufficient condition for the open-ball family to be
faithful for packing dimension calculation]
\label{dost-umova-dov}
Suppose that
\begin{enumerate}
\item[\rm1.] $\varPhi$ is a family of intervals from $[0;1]$;
\item[\rm2.] $\exists C>0: \forall(a;b)\subset[0;1]$, $\exists\varDelta
(a;b)\in\varPhi$ such that:
\begin{enumerate}
\item[\rm(a)] $\frac{a+b}{2}\in\varDelta(a,b)$;
\item[\rm(b)] $\varDelta(a,b)\subset(a;b)$;
\item[\rm(c)] $\frac{b-a}{|\varDelta(a,b)|}\geqslant C$.
\end{enumerate}
\end{enumerate}
Then $\varPhi$ is a faithful open-ball family for packing dimension
calculation.\vadjust{\eject}
\end{theorem}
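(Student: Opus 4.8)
The plan is to establish the inequality complementary to Theorem~\ref{dimpfi_leq_dimp}. Since that theorem already gives $\dim_P(E,\varPhi)\leqslant\dim_{P(\mathit{unc})}(E)$ for every $E\subset[0;1]$, it remains only to prove $\dim_P(E,\varPhi)\geqslant\dim_{P(\mathit{unc})}(E)$. Throughout I work on $[0;1]\subset\mathbb{R}^1$, where the bounded-overlap hypothesis of the coincidence theorem $\dim_{P(\mathit{unc})}=\dim_P$ holds with a finite constant (one may take $8$, as in the corollary for $\mathbb{R}^1$). Fix $0<t<s<\dim_{P(\mathit{unc})}(E)$; the goal is to show $\dim_P(E,\varPhi)\geqslant t$ and then let $t\uparrow\dim_{P(\mathit{unc})}(E)$.

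First I would extract, at an arbitrarily small scale, a rich uncentered packing reduced to a single dyadic level, exactly as in the proof of the coincidence theorem. Because $s<\dim_{P(\mathit{unc})}(E)$ forces $\mathcal{P}^s_{r(\mathit{unc})}(E)=+\infty$ for every $r>0$, one selects an uncentered $r$-packing $\{E_i\}$ of $E$ with $\sum_i|E_i|^s>1$, partitions it into the dyadic classes $V_k=\{E_i:2^{-k-1}\leqslant|E_i|<2^{-k}\}$, and finds a level $k_0$ whose cardinality satisfies $n_{k_0}\geqslant2^{k_0t}(1-2^{t-s})$. Denoting these balls by $A_1,\dots,A_{n_{k_0}}$ and using that each meets $E$, I choose a point $x_i\in A_i\cap E$.

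The heart of the argument is to replace each $A_i$ by an interval of $\varPhi$ that still meets $E$ and has comparable length. Put $h:=2^{-k_0}$ and apply hypothesis~2 to the probe interval $(x_i-h;\,x_i+h)$ (truncated to $[0;1]$ when $x_i$ lies within $h$ of an endpoint, a routine adjustment), obtaining $\varDelta_i:=\varDelta(x_i-h,x_i+h)\in\varPhi$. Condition~(a) places the midpoint $x_i$ of the probe inside $\varDelta_i$, so $\varDelta_i\cap E\ni x_i$ and $\varDelta_i$ is admissible for an uncentered $\varPhi$-packing; condition~(b) confines $\varDelta_i$ to the window $(x_i-h;\,x_i+h)$, localizing it near $x_i$; and condition~(c) keeps the length of $\varDelta_i$ comparable to that of the probe, namely to $2h$. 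The key point is the decoupling: I do not require $\varDelta_i\subset A_i$, so a point $x_i$ sitting near the boundary of $A_i$ no longer forces $\varDelta_i$ to be short, since the probe half-width $h$ is chosen commensurate with the common scale rather than with the position of $x_i$ inside $A_i$.

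Finally I would pass to a disjoint subfamily and conclude. The $\varDelta_i$ need not be pairwise disjoint, so I group them greedily into classes as in the coincidence theorem: the first class collects $\varDelta_{j_1}$ and every $\varDelta_i$ meeting it, and so on. If $\varDelta_i$ meets the representative of its class, then $x_i$ lies within a bounded multiple of $h$ of the representative's centre, so the pairwise disjoint original intervals $A_i$ attached to one class, each of length at least $2^{-k_0-1}$, are all confined to a window of length a bounded multiple of $h$; by the doubling property of $[0;1]$ this bounds the size of every class by a finite constant $C_0$. Choosing one representative per class thus yields a genuine uncentered $\varPhi$-packing $\{\varDelta_{j_m}\}$ of cardinality at least $n_{k_0}/C_0$. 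Since each $|\varDelta_{j_m}|$ is comparable to $2^{-k_0}$ by~(c), the $t$-volume $\sum_m|\varDelta_{j_m}|^t$ is bounded below, up to a fixed factor, by $n_{k_0}2^{-k_0t}/C_0$, hence by $(1-2^{t-s})/C_0$, a bound independent of the scale $r$. Therefore $\mathcal{P}^t_{0}(E,\varPhi)>0$, and the covering-infimum argument together with the countable stability of the dimension, carried out verbatim as in the coincidence theorem, upgrades this to $\mathcal{P}^t(E,\varPhi)>0$, i.e.\ $\dim_P(E,\varPhi)\geqslant t$; letting $t\uparrow\dim_{P(\mathit{unc})}(E)$ finishes the proof. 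I expect the main obstacle to be precisely the simultaneous control of the two requirements ``$\varDelta_i$ meets $E$'' and ``$\varDelta_i$ is not too short'': these are reconciled by probing with a scale-sized interval centred at a point of $E$, reading off the first requirement from~(a) and the second from~(c), after which the overlaps are absorbed by the doubling property.
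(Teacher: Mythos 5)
Your argument is correct in substance, but it follows a genuinely different route from the paper's for the hard inequality $\dim_{P(\mathit{unc})}(E)\leqslant\dim_{P(\mathit{unc})}(E,\varPhi)$. The paper never touches uncentered packings of $E$ at this stage: it takes an arbitrary \emph{centered} $r$-packing $\{(a_i;b_i)\}$ of $E$ and substitutes each interval by $\varDelta(a_i,b_i)\in\varPhi$. Because the midpoint $\frac{a_i+b_i}{2}$ already lies in $E$, condition (a) gives $\varDelta(a_i,b_i)\cap E\neq\varnothing$ immediately, condition (b) gives disjointness for free (each $\varDelta$ sits inside its own $E_i$, and the $E_i$ are disjoint), and condition (c) gives $\sum_i|E_i|^\alpha\leqslant C^\alpha\sum_i|\varDelta(a_i,b_i)|^\alpha$; this yields $\mathcal{P}^\alpha_r(E)\leqslant C^\alpha\mathcal{P}^\alpha_{r(\mathit{unc})}(E,\varPhi)$ at every scale, hence $\dim_P(E)\leqslant\dim_{P(\mathit{unc})}(E,\varPhi)$, after which the coincidence theorem of Section 3 ($\dim_P=\dim_{P(\mathit{unc})}$ for subsets of $\mathbb{R}^1$) is invoked as a black box to turn the left-hand side into $\dim_{P(\mathit{unc})}(E)$; Theorem \ref{dimpfi_leq_dimp} then closes the equality, exactly as you do. You instead compare uncentered packings of $E$ with $\varPhi$-packings directly, which forces you to reproduce the coincidence theorem's machinery inside the faithfulness proof: dyadic level extraction, your probing trick (a scale-sized window centered at $x_i\in A_i\cap E$, needed precisely because condition (a) speaks only of midpoints), and greedy overlap control via the doubling property. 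This is sound and buys self-containedness --- your version never needs the centered notion at all, and the decoupling of the probe from the packing ball is exactly the right extra idea --- but it duplicates work the paper had already modularized, so the paper's proof of this particular theorem reduces to a few lines of substitution. Two repairs to note: your endpoint fix by truncation does not literally work, since truncating $(x_i-h;x_i+h)$ to $[0;1]$ moves the midpoint away from $x_i$ and condition (a) then no longer produces an interval meeting $E$; instead, discard the boundedly many balls lying within $h$ of $0$ or $1$, which is harmless because $n_{k_0}\to\infty$ as $r\to0$. Also, like the paper's own proof, you silently read condition (c) as $\frac{b-a}{|\varDelta(a,b)|}\leqslant C$, i.e.\ as a lower bound on $|\varDelta(a,b)|$; the ``$\geqslant$'' in the statement is evidently a typo, as both the paper's proof and its application to $\tilde{Q}$-cylinders confirm.
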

\begin{proof}
Let $E$ be any set, $\alpha\geqslant0$, and $r>0$. Let $\{E_i\}=\{
(a_i;b_i)\}$ be a family of disjoint intervals such that $\frac
{a_i+b_i}{2}\in E$ and $b_i-a_i<r$.

Then the following inequality holds:
\[
\sum_i |E_i|^\alpha
\leqslant \sum_i \big|\varDelta(a_i,b_i)\big|^\alpha
\cdot C^\alpha.
\]

Taking the supremum (over all sets of intervals $\{E_i\}$ satisfying
the previous conditions), we have
\[
\mathcal{P}^\alpha_r(E) \leqslant \sup_{\{E_i\}}\big|
\varDelta(a_i,b_i)\big|^\alpha\cdot C^\alpha.
\]

Any set of intervals $\{\varDelta(a_i,b_i)\}$ satisfies the conditions
from the $\mathcal{P}^\alpha_{r(\mathit{unc})}(E,\varPhi)$ definition. So,
\[
\sup_{\{E_i\}}\big|\varDelta(a_i,b_i)\big|^\alpha
\cdot C^\alpha \leqslant \mathcal{P}^\alpha_{r(\mathit{unc})}(E,
\varPhi) \cdot C^\alpha.
\]
Therefore,
\[
\mathcal{P}^\alpha_r(E) \leqslant \mathcal{P}^\alpha_{r(\mathit{unc})}(E,
\varPhi) \cdot C^\alpha.
\]
Taking the limit of both sides, we have
\[
\mathcal{P}^\alpha_0(E) \leqslant \mathcal{P}^\alpha_{0(\mathit{unc})}(E,
\varPhi) \cdot C^\alpha.
\]

Taking the infimum over all possible coverings of the set $E$, we have
\[
\mathcal{P}^\alpha(E) \leqslant \mathcal{P}^\alpha_{(\mathit{unc})}(E,
\varPhi) \cdot C^\alpha
\]
and
\[
\dim_P(E)\leqslant\dim_{P(\mathit{unc})}(E,\varPhi).
\]
Since $[0;1]\subset\mathbb{R}^1$, it follows that
\[
\dim_P(E)=\dim_{P(\mathit{unc})}(E)
\]
and
\[
\dim_{P(\mathit{unc})}(E) \leqslant\dim_{P(\mathit{unc})}(E,\varPhi).
\]
Using
\[
\dim_{P(\mathit{unc})}(E) \geqslant\dim_{P(\mathit{unc})}(E,\varPhi)\quad\mbox{for all
} \varPhi,
\]
we obtain that $\varPhi$ is a faithful open-ball family for the packing
dimension calculation.
\end{proof}

\section{Sufficient conditions for $\tilde{Q}$-expansion cylindric
interval family to be faithful}

The $\tilde{Q}$-expansion of real numbers is a generalization of
$s$-expansion and $Q$-expansion and was described, for example, in \cite
{AKPT2011}.

\begin{theorem}
\label{faithfulness-of-tilde-q}
Let $\varPhi$ be the system of cylindric intervals of some $\tilde
{Q}$-expansion. Suppose that
\[
\inf_{i,j} q_{ij}=q_{\min}>0.
\]
Then $\varPhi$ is a faithful ball family for packing dimension calculation.
\end{theorem}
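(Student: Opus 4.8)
The plan is to deduce everything from Theorem~\ref{dost-umova-dov}: it suffices to check its two hypotheses for the family $\varPhi$ of $\tilde{Q}$-cylindric intervals. The first hypothesis is immediate, since every $\tilde{Q}$-cylinder is a subinterval of $[0;1]$. All the work goes into the second: producing, for an arbitrary $(a;b)\subset[0;1]$, a single cylinder $\varDelta(a,b)\in\varPhi$ that contains the midpoint $\frac{a+b}{2}$, lies inside $(a;b)$, and has length comparable to $b-a$ with a constant $C$ independent of $(a;b)$.

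First I would record the refinement structure together with the two consequences of $q_{\min}>0$ that drive the argument. The rank-$k$ cylinders partition $[0;1]$, and each rank-$k$ cylinder splits into $n_{k+1}$ rank-$(k+1)$ subcylinders whose lengths are the parent length times $q_{1,k+1},\dots,q_{n_{k+1},k+1}$ (summing to $1$). Since each row has at least two positive entries, all $\geqslant q_{\min}$, we get $q_{ij}\leqslant 1-q_{\min}$, so the maximal length of a rank-$k$ cylinder is at most $(1-q_{\min})^k\to 0$; moreover, passing from a cylinder to any of its children multiplies the length by a factor in $[q_{\min},\,1-q_{\min}]$, in particular shrinks it by a factor at least $q_{\min}$.

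Next, set $c=\frac{a+b}{2}$ and consider the nested sequence $\varDelta^{(0)}\supset\varDelta^{(1)}\supset\cdots$ of cylinders containing $c$ (at a $\tilde{Q}$-rational $c$, where $c$ is a shared endpoint, I fix once and for all the subcylinder on, say, the right). Let $k^{*}$ be the least rank with $\varDelta^{(k^{*})}\subset(a;b)$; it exists because the lengths tend to $0$ and $c$ is interior to $(a;b)$ (the boundary case $(a;b)$ nearly equal to $(0;1)$, i.e.\ $k^{*}=0$, is treated directly). Put $\varDelta(a,b):=\varDelta^{(k^{*})}$; then conditions (a) and (b) hold by construction. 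For the length comparison, minimality gives $\varDelta^{(k^{*}-1)}\not\subset(a;b)$ while $c\in\varDelta^{(k^{*}-1)}$, so one endpoint of $\varDelta^{(k^{*}-1)}$ lies outside $(a;b)$; as $c$ is the midpoint, the distance from $c$ to each endpoint of $(a;b)$ is $\frac{b-a}{2}$, forcing $|\varDelta^{(k^{*}-1)}|\geqslant\frac{b-a}{2}$. One further refinement loses at most the factor $q_{\min}$, whence $|\varDelta(a,b)|\geqslant q_{\min}|\varDelta^{(k^{*}-1)}|\geqslant\frac{q_{\min}}{2}(b-a)$, while $|\varDelta(a,b)|\leqslant b-a$ by (b). Thus $\frac{b-a}{|\varDelta(a,b)|}\in\bigl[1,\frac{2}{q_{\min}}\bigr]$, which is condition (c) with $C=\frac{2}{q_{\min}}$. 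Theorem~\ref{dost-umova-dov} then gives faithfulness of $\varPhi$.

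The main obstacle is exactly the length-comparison step, and it is where $q_{\min}>0$ is indispensable: without a uniform lower bound on the refinement ratios, the first cylinder fitting inside $(a;b)$ could be arbitrarily short relative to $b-a$, so no uniform $C$ would exist and the estimate in Theorem~\ref{dost-umova-dov} would fail. The only secondary points needing care are the ambiguity of the cylinder through a $\tilde{Q}$-rational midpoint and the degenerate endpoint case $k^{*}=0$; both are disposed of by the fixed one-sided choice of cylinder and by handling $(a;b)\approx(0;1)$ separately.
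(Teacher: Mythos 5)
Your core construction is exactly the paper's: take the cylinder $\varDelta(a,b)$ of minimal rank containing the midpoint $c=\frac{a+b}{2}$ and contained in $(a;b)$, observe that its parent must have length at least $\frac{b-a}{2}$, and lose at most a factor $q_{\min}$ in one refinement step, giving $|\varDelta(a,b)|\geqslant\frac{q_{\min}}{2}(b-a)$ and hence the hypotheses of Theorem~\ref{dost-umova-dov} with $C=\frac{2}{q_{\min}}$. Your unified argument (``the parent contains $c$ but has an endpoint outside $(a;b)$'') is in fact a cleaner version of the paper's two-case analysis (whether $(a;b)$ meets one or two cylinders of rank $k-1$), and you correctly read condition (c) of Theorem~\ref{dost-umova-dov} as the upper bound $b-a\leqslant C\,|\varDelta(a,b)|$, which is what the proof of that theorem actually uses (the inequality sign in its statement is a typo).

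The gap is in your treatment of $\tilde{Q}$-rational midpoints. In this framework $\varPhi$ must be a family of \emph{open} balls --- the faithfulness definition is stated for open-ball families, and Lemma~\ref{first-lemma} speaks explicitly of ``cylinder interiors'' --- so the cylindric intervals are open. If $c$ is $\tilde{Q}$-rational, then from some rank on $c$ is an endpoint of every cylinder containing it, so \emph{no} cylinder interior of small diameter contains $c$ at all. Your fix, ``fix the subcylinder on the right,'' yields a cylinder whose interior does not contain $c$, so condition (a) fails; and containment of the midpoint is not cosmetic, because in the proof of Theorem~\ref{dost-umova-dov} the cylinder $\varDelta(a_i,b_i)$ must intersect $E$, and the midpoint is the only point of $E$ one controls. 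Worse, for such $c$ hypothesis 2 of Theorem~\ref{dost-umova-dov} is unsatisfiable for all sufficiently short intervals centered at $c$, so the theorem cannot be invoked verbatim ``for all $(a;b)\subset[0;1]$'' as you claim. The paper's repair is different and is the missing ingredient: first reduce to sets $E$ disjoint from the countable set $\tilde{Q}_0$ of $\tilde{Q}$-rational points, using $\dim_{P(\mathit{unc})}(\tilde{Q}_0)=\dim_{P(\mathit{unc})}(\tilde{Q}_0,\varPhi)=0$ and countable stability of both dimensions, and then observe that in the proof of Theorem~\ref{dost-umova-dov} only intervals with midpoints in $E$ --- hence non-$\tilde{Q}$-rational --- ever occur. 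With that reduction, your length estimate applies verbatim and the proof closes.
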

\begin{proof}
Let $\tilde{Q}_0$ be the set of $\tilde{Q}$-rational points, and $E'$
be any subset of $[0;1]$. Let $E=E'\setminus\tilde{Q}_0$. Since $\tilde
{Q}_0$ is countable, it follows that
\[
\dim_{P(\mathit{unc})}(\tilde{Q}_0)=0, \qquad \dim_{P(\mathit{unc})}(\tilde{Q}_0,\varPhi)=0,
\]
and
\[
\dim_{P(\mathit{unc})}\bigl(E'\bigr)=\dim_{P(\mathit{unc})}(E), \qquad
\dim_{P(\mathit{unc})}\bigl(E',\varPhi\bigr)=\dim_{P(\mathit{unc})}(E,
\varPhi).
\]
The proof is completed by showing that
\[
\dim_{P(\mathit{unc})}(E)=\dim_{P(\mathit{unc})}(E,\varPhi)
\]
for every set $E\subset[0;1]$ if $E$ does not contain $\tilde
{Q}$-rational points.

Let $(a;b)\subset[0;1]$. Let $\varDelta(a,b)$ be the $\tilde
{Q}$-cylindric interval of the minimal rank such that
\[
\frac{a+b}{2} \in \varDelta(a;b) \subset(a;b).
\]

Denote the rank of $\varDelta(a,b)$ by $k$. Since this rank is minimal, it
follows that $(a;b)$ is a subset of one or two cylinders with rank
$k-1$. Let us denote the cylinder with rank $k-1$ that contains $\varDelta
(a,b)$ by $\varDelta'$. If the second cylinder exists, then we denote it
by $\varDelta''$.

Let us consider the following two cases.

\begin{case}
The $\varDelta''$ does not exist. Then
\[
\big|\varDelta(a,b)\big|\leqslant b-a\leqslant\big|\varDelta'\big|,
\]
and, therefore,
\[
\big|\varDelta(a,b)\big|\geqslant(b-a)\cdot q_{\min}.
\]
\end{case}

\begin{case}
The $\varDelta''$ exists. Then
\[
\big|\varDelta'\big|\cdot2\geqslant b-a \quad \Rightarrow \quad \big|\varDelta(a,b)\big|
\geqslant(b-a)\cdot\frac{q_{\min}}{2}.
\]
\end{case}

\textbf{Summary of the two cases.}
For every interval $(a;b)$, there exists a $\tilde{Q}$-cylindric
interval $\varDelta(a,b)$ such that $\frac{a+b}{2}\in\varDelta(a;b)$ and
\[
\big|\varDelta(a,b)\big|\geqslant(b-a)\cdot\frac{q_{\min}}{2}.
\]
It follows that the family $\varPhi$ satisfies the conditions of Theorem
\ref{dost-umova-dov} and is faithful for packing dimension calculation.
\end{proof}

\begin{corollary}
Let $\varPhi$ be a family of $Q^*$-cylinders under the condition $\inf_{i,j} q_{ij}>0$. Then $\varPhi$ is faithful for packing dimension calculation.
\end{corollary}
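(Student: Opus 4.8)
The plan is to recognize the corollary as a direct specialization of Theorem~\ref{faithfulness-of-tilde-q}. The $Q^*$-expansion is precisely the particular case of the $\tilde{Q}$-expansion in which the number of admissible digits at every level is one and the same fixed number $s$, that is, $n_k \equiv s$; the stochastic matrix $\|q_{ik}\|$ governing a $Q^*$-expansion is then exactly a $\tilde{Q}$-matrix with columns of constant length. I would begin by making this identification explicit: the family of $Q^*$-cylindric intervals of a given $Q^*$-expansion coincides, interval by interval, with the family $\varPhi$ of $\tilde{Q}$-cylindric intervals of the associated $\tilde{Q}$-expansion.

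Next, I would observe that the separation hypothesis transfers without any change. The condition $\inf_{i,j} q_{ij}>0$ imposed on the $Q^*$-coefficients is literally the condition $\inf_{i,j} q_{ij}=q_{\min}>0$ required in Theorem~\ref{faithfulness-of-tilde-q}, since the two infima are taken over the same entries of the same matrix. Having matched both the cylinder family and the hypothesis, I would simply invoke Theorem~\ref{faithfulness-of-tilde-q} to conclude that $\varPhi$ is faithful for packing dimension calculation, which is exactly the assertion of the corollary.

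The proof presents no genuine obstacle, and the only point that deserves care is purely definitional: one must verify that a $Q^*$-expansion is indeed a $\tilde{Q}$-expansion with constant alphabet size, so that Theorem~\ref{faithfulness-of-tilde-q} applies verbatim. Once this embedding of the $Q^*$ setting into the $\tilde{Q}$ setting is acknowledged, the conclusion is immediate.
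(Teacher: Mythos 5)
Your proposal is correct and matches the paper's (implicit) argument: the corollary is stated without proof precisely because a $Q^*$-expansion is the special case of a $\tilde{Q}$-expansion with a constant number of digits at every rank, so Theorem~\ref{faithfulness-of-tilde-q} applies verbatim under the identical hypothesis $\inf_{i,j} q_{ij}>0$. Your care in checking the definitional embedding is exactly the right (and only) point to verify.
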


\begin{corollary}
Let $\varPhi$ be a family of $Q$-cylinders. Then $\varPhi$ is faithful for
packing dimension calculation.
\end{corollary}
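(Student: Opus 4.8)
The plan is to derive this directly from Theorem~\ref{faithfulness-of-tilde-q} (equivalently, from the preceding corollary on $Q^*$-cylinders) by verifying that the positivity hypothesis $\inf_{i,j} q_{ij}>0$ holds automatically for every $Q$-expansion, so that no new argument is needed. First I would recall that the $Q$-expansion is the special case of the $\tilde{Q}$-expansion (and of the $Q^*$-expansion) in which the underlying stochastic matrix is \emph{independent of the position}~$j$: there is a fixed finite alphabet $\{0,1,\dots,s-1\}$ and a fixed stochastic vector $(q_0,q_1,\dots,q_{s-1})$ with $q_i>0$ and $\sum_i q_i=1$, so that $q_{ij}=q_i$ for all positions $j$. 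In particular, the family $\varPhi$ of $Q$-cylinders coincides with the system of cylindric intervals of this special $\tilde{Q}$-expansion.

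The key point is then immediate: since the alphabet is finite and every $q_i$ is strictly positive, a minimum of finitely many positive numbers is positive, whence
\[
\inf_{i,j} q_{ij}=\min_{0\leqslant i\leqslant s-1} q_i>0.
\]
Thus the separation-from-zero hypothesis of Theorem~\ref{faithfulness-of-tilde-q} is satisfied, and that theorem yields at once that $\varPhi$ is a faithful ball family for packing dimension calculation.

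There is essentially no obstacle to overcome here, and this is by design: the content of the corollary is that the condition $\inf_{i,j} q_{ij}>0$, which must be imposed by hand in the general $\tilde{Q}$- and $Q^*$-settings, is built into the definition of the classical $Q$-expansion because the coefficient matrix has only finitely many positive entries and does not vary with the level. Hence the corollary is a genuine specialization rather than a fresh proof, and the whole argument reduces to observing $\min_i q_i>0$ before invoking the already established faithfulness theorem.
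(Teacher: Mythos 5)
Your proposal is correct and matches the paper's (implicit) argument exactly: the paper states this corollary without proof as an immediate specialization of Theorem~\ref{faithfulness-of-tilde-q}, since a $Q$-expansion has position-independent coefficients $q_{ij}=q_i$ drawn from a finite set of positive numbers, so $\inf_{i,j}q_{ij}=\min_i q_i>0$ holds automatically. Nothing is missing; your observation that the separation-from-zero hypothesis is built into the definition of the $Q$-expansion is precisely the point of the corollary.
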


\begin{corollary}
Let $\varPhi$ be a family of $s$-adic cylinders. Then $\varPhi$ is faithful
for packing dimension calculation.
\end{corollary}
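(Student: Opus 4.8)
The plan is to realize the $s$-adic expansion as a special case of the $\tilde{Q}$-expansion and then invoke Theorem \ref{faithfulness-of-tilde-q} directly. First I would recall that in the $s$-adic expansion each of the $s$ digits carries the same weight at every position: the cylinder of rank $k$ determined by a fixed string of $k$ digits is an interval of length $s^{-k}$. In the notation of the $\tilde{Q}$-framework this means precisely that $n_j = s$ and $q_{ij} = 1/s$ for every digit $i$ and every position $j$.

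With this identification in hand, the hypothesis of Theorem \ref{faithfulness-of-tilde-q} is immediate: since all the coefficients $q_{ij}$ coincide with the single positive number $1/s$, we have $\inf_{i,j} q_{ij} = 1/s > 0$, so that $q_{\min} = 1/s > 0$. No restriction on the (fixed) digit alphabet is needed, because the positivity holds automatically. Applying Theorem \ref{faithfulness-of-tilde-q} to the family $\varPhi$ of $s$-adic cylinders then yields that $\varPhi$ is faithful for packing dimension calculation.

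I do not expect any genuine obstacle here, as the entire argument reduces to the observation that the $s$-adic setup embeds into the $\tilde{Q}$-setup with constant coefficients $1/s$, after which the main faithfulness theorem applies verbatim. Alternatively, one could note that the $s$-adic expansion is the instance of the $Q$-expansion with constant probability vector $(1/s,\dots,1/s)$ and simply appeal to the preceding corollary on $Q$-cylinders, whose hypotheses are satisfied for the same reason.
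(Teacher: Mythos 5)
Your proposal is correct and matches the paper's intended argument exactly: the paper states this as an immediate corollary of Theorem \ref{faithfulness-of-tilde-q}, the point being that the $s$-adic expansion is the $\tilde{Q}$-expansion with constant coefficients $q_{ij}=1/s$, so that $\inf_{i,j} q_{ij}=1/s>0$ and the theorem applies verbatim. Your alternative route through the $Q$-cylinder corollary is equally valid and reflects the same chain of specializations ($\tilde{Q} \supset Q^* \supset Q \supset s$-adic) that the paper itself follows in its sequence of corollaries.
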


\section{Proof of the main result}

To prove the main result, we need the following two lemmas.

\begin{lemma}
\label{first-lemma}
Let $\tilde{Q}$ be the matrix $\|q_{ik}\|$, $i\in\mathbb{N}$, $k\in\{
0,1,\dots,N_k-1\}$. If
\[
\lim_{i\rightarrow\infty} \frac
{\ln q_{i_k k}}{
\ln(q_{i_1 1} q_{i_2 2} \dots q_{i_{k-1} (k-1)})} =0
\]
for every sequence $(i_k)$, then the open-ball family $\varPhi$ of the
respective expansion cylinder interiors is faithful for packing
dimension calculation.
\end{lemma}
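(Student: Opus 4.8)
By Theorem~\ref{dimpfi_leq_dimp} the inequality $\dim_{P(\mathit{unc})}(E,\varPhi)\le\dim_{P(\mathit{unc})}(E)$ is automatic, so the whole content of the lemma is the reverse inequality $\dim_{P(\mathit{unc})}(E)\le\dim_{P(\mathit{unc})}(E,\varPhi)$. The plan is to reuse the mechanism of the proofs of Theorems~\ref{dost-umova-dov} and~\ref{faithfulness-of-tilde-q}: to each interval $(a;b)$ I attach the cylinder $\varDelta(a,b)\in\varPhi$ of minimal rank with $\frac{a+b}{2}\in\varDelta(a,b)\subset(a;b)$, and then replace an arbitrary centered packing of $E$ by the packing made of these cylinders. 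The crucial difference with Theorem~\ref{faithfulness-of-tilde-q} is that now $q_{\min}$ may vanish, so there is no uniform constant $C$ with $\frac{b-a}{|\varDelta(a,b)|}\le C$ and Theorem~\ref{dost-umova-dov} cannot be quoted directly. Instead I will pay for the missing constant with an arbitrarily small loss $(1+\varepsilon)$ in the exponent, and send $\varepsilon\to0$ at the very end.

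First I would record the geometric estimate. If $\varDelta(a,b)$ has rank $k$ and its rank-$(k-1)$ parent is $\varDelta'$, then, exactly as in Theorem~\ref{faithfulness-of-tilde-q}, minimality forces $\varDelta'\not\subset(a;b)$ while $\frac{a+b}{2}\in\varDelta'$, so $|\varDelta'|\ge\frac{b-a}{2}$ and $|\varDelta(a,b)|=|\varDelta'|\cdot q_{i_k k}$, where $i_k$ is the $k$-th digit of $\varDelta(a,b)$ and $|\varDelta'|=q_{i_1 1}q_{i_2 2}\cdots q_{i_{k-1}(k-1)}$. The hypothesis is precisely that $\frac{-\ln q_{i_k k}}{-\ln|\varDelta'|}\to0$, which I would rewrite as $q_{i_k k}>|\varDelta'|^{\varepsilon}$; it then yields $|\varDelta(a,b)|=|\varDelta'|q_{i_k k}>|\varDelta'|^{1+\varepsilon}\ge\big(\frac{b-a}{2}\big)^{1+\varepsilon}$, that is, $(b-a)^{\alpha}\le 2^{\alpha}|\varDelta(a,b)|^{\alpha/(1+\varepsilon)}$, valid for every cylinder of sufficiently large rank.

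The decisive step, and the one I expect to be the main obstacle, is upgrading the pointwise hypothesis to a rank-uniform one: for every $\varepsilon>0$ there is $K$ such that $\frac{-\ln q_{i_k k}}{-\ln(q_{i_1 1}\cdots q_{i_{k-1}(k-1)})}<\varepsilon$ holds for all $k\ge K$ and all digit strings $(i_1,\dots,i_k)$. Since the alphabets $N_k$ are finite, the cylinders form a finitely branching tree; if uniformity failed there would be, for some $\varepsilon_0>0$, finite strings of unbounded depth with ratio $\ge\varepsilon_0$, and by König's lemma one could extract a single infinite sequence $(i_k)$ along which the ratio does not tend to $0$, contradicting the hypothesis. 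This is exactly where the quantifier ``for every sequence $(i_k)$'' is used, and making this extraction airtight is the delicate part of the argument.

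With the uniform estimate in hand the rest is bookkeeping parallel to Theorem~\ref{dost-umova-dov}. Fix $\varepsilon>0$, take $K$ as above, and choose $r$ below the (positive, finite) minimum length of all cylinders of rank $\le K$, so that every interval of diameter $<r$ has minimal cylinder of rank $>K$. For any centered $r$-packing $\{(a_i;b_i)\}$ of a bounded set $F$ the cylinders $\{\varDelta(a_i,b_i)\}$ are pairwise disjoint, meet $F$, have diameter $<r$, and lie in $\varPhi$, hence form an uncentered $r$-packing from $\varPhi$; therefore $\sum_i(b_i-a_i)^{\alpha}\le 2^{\alpha}\sum_i|\varDelta(a_i,b_i)|^{\alpha/(1+\varepsilon)}\le 2^{\alpha}\,\mathcal{P}^{\alpha/(1+\varepsilon)}_{r(\mathit{unc})}(F,\varPhi)$. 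Taking the supremum over packings, then $r\to0$, then the infimum over countable coverings of $E$ (the estimate is uniform in the piece), yields $\mathcal{P}^{\alpha}(E)\le 2^{\alpha}\mathcal{P}^{\alpha/(1+\varepsilon)}(E,\varPhi)$. Hence $\mathcal{P}^{\alpha/(1+\varepsilon)}(E,\varPhi)=0$ forces $\mathcal{P}^{\alpha}(E)=0$, so $\dim_P(E)\le(1+\varepsilon)\dim_{P(\mathit{unc})}(E,\varPhi)$; letting $\varepsilon\to0$ and recalling that $\dim_{P(\mathit{unc})}(E)=\dim_P(E)$ on $[0;1]\subset\mathbb{R}^1$ gives $\dim_{P(\mathit{unc})}(E)\le\dim_{P(\mathit{unc})}(E,\varPhi)$, which together with Theorem~\ref{dimpfi_leq_dimp} establishes faithfulness.
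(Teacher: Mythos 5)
Your reduction to the reverse inequality, the geometric estimate via the parent cylinder ($|\varDelta'|\ge\frac{b-a}{2}$, $|\varDelta(a,b)|=|\varDelta'|\,q_{i_k k}$), and the final bookkeeping (multiplicative loss $\alpha\mapsto\alpha/(1+\varepsilon)$, supremum over packings, $r\to0$, infimum over coverings, then $\varepsilon\to0$) are all fine, and they would deliver the lemma \emph{if} your rank-uniform claim were established. The gap is exactly at the step you yourself flagged as delicate, and your proposed fix does not work. K\"onig's lemma, applied to the finitely branching tree of all prefixes of ``bad'' strings (those with ratio $\ge\varepsilon_0$ at their last position), produces an infinite path each of whose prefixes extends to \emph{some} bad string; it does not produce a path that itself passes through infinitely many bad strings, and only the latter would contradict the hypothesis. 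In fact, in typical situations every finite string extends to a bad string (continue with moderate digits long enough, then append one digit of very small $q$), so the prefix tree is the \emph{entire} tree and the K\"onig path can perfectly well be a path along which the ratio tends to $0$.

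The failure is not cosmetic: the pointwise-to-uniform implication is false at the level of generality at which your argument operates. On a finitely branching tree whose node weights may depend on the whole prefix (ratio of a node $=$ its weight divided by the sum of its ancestors' weights), one can hang a single bad node at each depth off a fixed reference path and make all weights below each bad node decay rapidly; then every infinite path meets at most one bad node, so the pointwise hypothesis holds, yet bad nodes occur at every depth. Hence any correct proof of your uniform claim must exploit precisely what your sketch never uses: that $q_{i_k k}$ depends only on the digit and the position, and that $\sum_i q_{ik}=1$. (With these ingredients the claim is true: a bad string of depth $k$ exists iff $\max_i|\ln q_{ik}|\ge\varepsilon_0\sum_{j<k}\min_i|\ln q_{ij}|$; column stochasticity rules out bounded maxima along an infinite set of bad depths, and in the unbounded case one builds a path violating the hypothesis \emph{greedily}, always jumping to a later bad depth whose weight dominates everything accumulated so far --- not by K\"onig's lemma.) Alternatively --- and this is how the paper itself proceeds --- uniformity can be avoided altogether: stratify $E=\bigcup_{m}W_{m,\delta}$, where $W_{m,\delta}$ is the set of $x\in E$ whose ratios are $<\delta$ for all ranks $k\ge m$, run your packing comparison on each $W_{m,\delta}$ separately (the threshold $r$ is then allowed to depend on $m$), and glue the resulting inequalities $\dim_{P(\mathit{unc})}(W_{m,\delta},\varPhi)\ge\dim_P(W_{m,\delta})-\delta$ by the countable stability of the packing dimension, letting $\delta\to0$ at the end.
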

\begin{proof}
Let us fix a set $E\subset[0;1]$. Let us fix any numbers $m\in\mathbb
{N}$, $\delta>0$ and consider the following sets:
\[
W_{m,\delta}= \biggl\{ x\in E: \frac
{\ln q_{i_k k}(x)}{
\ln(q_{i_1 1}(x) q_{i_2 2}(x) \dots q_{i_{k-1} (k-1)}(x))}<\delta, \ \forall k
\geqslant m \biggr\}.
\]

Fix some value $m$ and consider any set $W_{m,\delta}$ corresponding to
this value. There exists $\varepsilon>0$ such that $|c_m|\geqslant
\varepsilon$ for any cylinder $c_m$ of rank $m$. Consider the centered
$\varepsilon$-packing of the set $W_{m,\delta}$ by intervals $E_j$.

For every interval $E_j$, there exists a cylindric interval $\varDelta
(E_j)$ such that:
\begin{enumerate}
\item$\varDelta(E_j)\subset E_j$;
\item$\varDelta(E_j)$ contains the middle point $x_j$ of the $E_j$;
\item$\varDelta(E_j)$ has the minimal possible rank. We denote this rank
by $i_j$.
\end{enumerate}

We will say that the cylinder $\varDelta'(E_j)$ is the ``father'' of
$\varDelta(E_j)$ if $\varDelta'(E_j)\supset\varDelta(E_j)$ and the rank of
$\varDelta'(E_j)$ is equal to $i_j-1$. It is obvious that $|\varDelta
'(E_j)|\geqslant\frac{|E_j|}{2}$. Therefore,
\[
|E_j| \leqslant\frac{2|\varDelta(E_j)|}{q_{i_{k_j} k_j}(x_j)}, \quad \text{where } x_j
\in W_{m,\delta}.
\]

Let us estimate the $\alpha$-volume of packing of the set $E$ by
intervals $E_j$:
\[
\sum_k |E_j|^\alpha\leqslant
\sum_k \big|\varDelta(E_j)\big|^\alpha
\cdot \biggl( \frac{2}{q_{i_{k_j} k_j}(x_j)} \biggr)^\alpha.
\]

This inequality is equivalent to
\[
\sum_k |E_j|^\alpha\leqslant
\sum_k \big|\varDelta(E_j)\big|^{\alpha-\delta}
\cdot\big|\varDelta(E_j)\big|^\delta\cdot \biggl( \frac{2}{q_{i_{k_j} k_j}(x_j)}
\biggr)^\alpha.
\]
Let us estimate the expression
\begin{align*}
&\ln \biggl(\big|\varDelta(E_j)\big|^\delta\cdot \biggl(\frac{2}{q_{i_{k_j} k_j}(x_j)} \biggr)^\alpha \biggr)\\
&\quad = \delta\ln\bigl(q_{i_1 1}(x)
q_{i_2 2}(x) \dots q_{i_{k_j-1}
(k_j-1)}(x)\bigr)+\alpha\ln2-\alpha\ln
q_{i_{k_j} k_j}(x_j).
\end{align*}
Since $x_j \in W_{m,\delta}$, it follows that
\[
\delta\ln \bigl(q_{i_1 1}(x) q_{i_2 2}(x) \dots q_{i_{k_j-1}
(k_j-1)}
\bigr) \leqslant\ln q_{i_{k_j} k_j}(x_j).
\]
Therefore,
\[
\ln \biggl(\big|\varDelta(E_j)\big|^\delta\cdot \biggl(
\frac{2}{q_{i_{k_j} k_j}(x_j)} \biggr)^\alpha \biggr) \leqslant\alpha\ln2 + (1-\alpha)
\ln q_{i_{k_j} k_j}(x_j) \leqslant\alpha\ln2.
\]
Thus, we have
\[
\sum_j |E_j|^\alpha
\leqslant 2\sum_j \big|\varDelta(E_j)\big|^{\alpha-\delta}.
\]
Take the suprema over all possible centered packings $\{E_j\}$ of both
parts of the previous inequality:
\[
\mathcal{P}^{\alpha}_\varepsilon(W_{m,\delta}) \leqslant2\mathcal
{P}^{\alpha-\delta}_{\varepsilon(\mathit{unc})}(W_{m,\delta},\varPhi).
\]
Take the limit as $\varepsilon\rightarrow0$:
\[
\mathcal{P}^{\alpha}_0(W_{m,\delta}) \leqslant2
\mathcal{P}^{\alpha
-\delta}_{0(\mathit{unc})}(W_{m,\delta},\varPhi).
\]
We obtain that
\[
\mathcal{P}^{\alpha}(W_{m,\delta}) \leqslant2\mathcal{P}^{\alpha-\delta
}_{(\mathit{unc})}(W_{m,\delta},
\varPhi).
\]
Denote $\alpha_0=\dim_P(W_{m,\delta})$. Then for all $\alpha<\alpha_0$,
the left part is equal to infinity. Thus, for all $\alpha<\alpha_0$,
the right part is equal to infinity too. It follows that
\[
\dim_{P(\mathit{unc})}(W_{m,\delta},\varPhi) \geqslant\alpha-\delta
\]
and
\[
\dim_{P(\mathit{unc})}(W_{m,\delta},\varPhi) \geqslant\dim_P(W_{m,\delta})-
\delta.
\]
Using the definition of $W_{m,\delta}$, we get
\[
E=\bigcup_{m=1}^\infty W_{m,\delta}.
\]
Now, by packing dimension countable stability,
\[
\dim_{P(\mathit{unc})}(E,\varPhi) \geqslant\dim_P(E)-\delta.
\]
Since $\delta$ can be arbitrarily small,
\[
\dim_{P(\mathit{unc})}(E,\varPhi) \geqslant\dim_P(E).
\]
To complete the proof, it remains to note that $E$ is any subset of
$[0;1]$. Thus, $\varPhi$ is faithful.
\end{proof}

\begin{lemma}
Let $\varPhi$ be a family of $\tilde{Q}$-expansion cylinders under the
condition\break $\inf q_{ij}>0$. Let $F_\xi$ be a distribution function of a
random variable \(\xi\) with independent $\tilde{Q}$-digits. Assume
that the following condition holds:\vadjust{\eject}
\begin{equation}
\label{ln_ratio} \lim_{n\rightarrow\infty}\frac
{\ln\lambda(F(\varDelta_n(x)))}{
\ln\lambda(\varDelta_n(x))}=1, \quad\forall x
\in[0;1],
\end{equation}
where $\varDelta_n(x)$ is the $n$-rank cylinder that contains $x$.

Then $\varPhi'=F(\varPhi)$ is faithful for packing dimension calculation.
\end{lemma}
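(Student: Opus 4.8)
The plan is to recognize $\varPhi'=F(\varPhi)$ as the cylinder system of a new $\tilde{Q}$-type expansion and then to reduce the claim to Lemma~\ref{first-lemma}. Since $F$ is the distribution function of a random variable with independent $\tilde{Q}$-digits, it maps each $\tilde{Q}$-cylinder $\varDelta_{i_1\dots i_n}$ monotonically onto an interval whose Lebesgue length equals the $\mu_\xi$-measure of that cylinder, namely $\prod_{k=1}^{n}p_{i_k k}$. Hence the images $F(\varDelta_{i_1\dots i_n})$ form a nested, properly partitioning system of intervals in $[0;1]$ whose child/parent length ratios are exactly the probabilities $p_{i_k k}$; in other words, $\varPhi'$ is the cylinder family of the expansion governed by the matrix $\|p_{ik}\|$. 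Faithfulness of $\varPhi'$ therefore follows once this image expansion is shown to satisfy the hypothesis of Lemma~\ref{first-lemma}.

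First I would rewrite condition~\eqref{ln_ratio} in these terms. Fix $x\in[0;1]$ with digit sequence $(i_k)$ and set
\[
Q_n:=-\sum_{k=1}^{n}\ln q_{i_k k}=-\ln\lambda\bigl(\varDelta_n(x)\bigr),\qquad P_n:=-\sum_{k=1}^{n}\ln p_{i_k k}=-\ln\lambda\bigl(F(\varDelta_n(x))\bigr).
\]
Both quantities are positive and increasing, and since the cylinders $\varDelta_n(x)$ shrink to a point we automatically have $Q_n\to+\infty$. In this notation, \eqref{ln_ratio} says precisely that $P_n/Q_n\to1$ for every sequence $(i_k)$, whereas the hypothesis of Lemma~\ref{first-lemma} for the image expansion reads
\[
\frac{\ln p_{i_k k}}{\ln\bigl(p_{i_1 1}\cdots p_{i_{k-1}(k-1)}\bigr)}=\frac{|\ln p_{i_k k}|}{P_{k-1}}=\frac{P_k}{P_{k-1}}-1\longrightarrow 0,
\]
i.e.\ $P_k/P_{k-1}\to1$. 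So everything reduces to deriving $P_k/P_{k-1}\to1$ from $P_n/Q_n\to1$.

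This is where $q_{\min}>0$ enters. Writing
\[
\frac{P_k}{P_{k-1}}=\frac{P_k/Q_k}{P_{k-1}/Q_{k-1}}\cdot\frac{Q_k}{Q_{k-1}},
\]
the first factor tends to $1/1=1$ by \eqref{ln_ratio}. For the second factor, note that $Q_k-Q_{k-1}=|\ln q_{i_k k}|\le-\ln q_{\min}$ is bounded because $q_{i_k k}\ge q_{\min}$, while $Q_{k-1}\to+\infty$; hence $Q_k/Q_{k-1}=1+|\ln q_{i_k k}|/Q_{k-1}\to1$. Thus $P_k/P_{k-1}\to1$, the hypothesis of Lemma~\ref{first-lemma} holds for the image expansion, and $\varPhi'$ is faithful. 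The only genuinely delicate point is the reduction carried out in the first paragraph, namely verifying that $F$ really transports the $\tilde{Q}$-cylinders into a well-defined cylinder system with length ratios $p_{ik}$; this rests on the monotonicity and continuity (absence of atoms) of $F$ together with the independence of the digits. Once that identification is in place, the analytic estimate above is short, with the boundedness of $|\ln q_{i_k k}|$ furnished by $q_{\min}>0$ doing all the work.
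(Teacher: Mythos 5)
Your proof is correct and follows essentially the same route as the paper: both identify $\varPhi'=F(\varPhi)$ as the cylinder system of a new $\tilde{Q}$-type expansion whose length ratios are the probabilities $p_{ik}$, and both then verify the hypothesis of Lemma~\ref{first-lemma} for that expansion directly from condition~\eqref{ln_ratio}, using $q_{\min}>0$ to bound $|\ln q_{i_k k}|$ and the divergence of the log-sums. Your multiplicative factorization $\frac{P_k}{P_{k-1}}=\frac{P_k/Q_k}{P_{k-1}/Q_{k-1}}\cdot\frac{Q_k}{Q_{k-1}}$ is just a cleaner bookkeeping of the same computation the paper performs by dividing numerator and denominator of the ratio in~\eqref{ln_ratio} by the partial log-sum.
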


\begin{proof}
$\varPhi'$ is the family of cylinders for some $\tilde{Q}$-expansion.
Denote this expansion by $\tilde{Q}'$ and the corresponding numbers
$q_{ij}$ by $q'_{ij}$.

It is not clear that condition $\inf q'_{ij}>0$ holds, so we cannot
Theorem~\ref{faithfulness-of-tilde-q}.

Let us show that the conditions of Lemma~\ref{first-lemma} hold for
this expansion. We have
\[
F\bigl(\varDelta^{\tilde{Q}}_{a_1 a_2 \dots a_n}(x)\bigr)=\varDelta^{\tilde{Q}'}_{a_1
a_2 \dots a_n}(x)
\]
and
\[
\ln\lambda\bigl(F\bigl(\varDelta_n(x)\bigr)\bigr)=\ln
\bigl(q'_{1a_1} q'_{2 a_2} \dots
q'_{n a_n}\bigr).
\]
Denote
\[
M=\limsup_{i\rightarrow\infty} \frac
{\ln q'_{ij_i}}{
\ln(q'_{1j_1} q'_{2j_2} \dots q'_{(i-1)j_{i-1}})}.
\]
To estimate $M$, we need the following equation:
\[
\lim_{n\rightarrow0}\frac
{\ln\lambda(F(\varDelta_n(x)))}{
\ln\lambda(\varDelta_n(x))}= \lim_{n\rightarrow0}
\frac
{\ln(q'_{1j_1} q'_{2j_2} \dots q'_{(i-1)j_{i-1}})+\ln q'_{ij_i}}{
\ln(q_{1j_1} q_{2j_2} \dots q_{(i-1)j_{i-1}})+\ln q_{ij_i}}.
\]
Dividing the nominator and denominator of the last fraction by\break $\ln
(q_{1j_1} q_{2j_2} \dots q_{(i-1)j_{i-1}})$, we obtain
\[
\lim_{n\rightarrow0}\frac
{1+\frac{\ln q'_{ij_i}}{\ln(q'_{1j_1} q'_{2j_2} \dots q'_{(i-1)j_{i-1}})}}{
\frac{\ln(q_{1j_1} q_{2j_2} \dots q_{(i-1)j_{i-1}})}{\ln(q'_{1j_1}
q'_{2j_2} \dots q'_{(i-1)j_{i-1}})}+\frac{\ln q_{i j_i}}{\ln(q'_{1j_1}
q'_{2j_2} \dots q'_{(i-1)j_{i-1}})}}=\frac{1+M}{1+0}=1 \quad \Rightarrow \quad
M=0.
\]
It follows that $\tilde{Q}'$ satisfies the conditions of Lemma~\ref{first-lemma},
and therefore $\varPhi'$ is faithful.
\end{proof}

\begin{proof}[Proof of the main result]

Let us show that if $F_\xi$ is $\mathit{PDP}$, then $\dim_P(\mu_\xi)=1$.

Assume the converse. Then there exists a set $E_\alpha$ such that $\mu
_\xi(E_\alpha)=1$ and $\dim_P(E_\alpha)=\alpha$. Consider $F_\xi
(E_\alpha)$. Since $\mu_\xi(E_\alpha)=1$, we have $\lambda(E_\alpha
)=1$, and thus $\dim_P(F_\xi(E_\alpha))=1$.

We obtain the following inequality:
\[
\dim_P\bigl(F_\xi(E_\alpha)\bigr)=1\neq\alpha=
\dim_P(E_\alpha),
\]
and this contradicts the assumption that $F_\xi$ is $\mathit{PDP}$. Therefore,
we will show that if $F_\xi$ is $\mathit{PDP}$, then $\dim_P(\mu_\xi)=1$.

The next part of the proof consists of two steps:
\begin{enumerate}
\item If $\dim_P(\mu_\xi)=1$ and $B=0$, then $F_\xi$ is $\mathit{PDP}$;
\item If $\dim_P(\mu_\xi)=1$ and $B\neq0$, then $F_\xi$ is not
$\mathit{PDP}$.\vadjust{\eject}
\end{enumerate}

Let $\varepsilon$ be some positive number such that $\varepsilon<\frac
{1}{2}q_{\min}$. Consider the following sets:
\[
\begin{aligned} & T^+_{\varepsilon,k}= \bigl\{j: j\in\mathbb{N}, j
\leqslant k, |p_{ij}-q_{ij}|\leqslant\varepsilon, \ i\in
\{0,1,\dots,s-1\} \bigr\},
\\
& T^-_{\varepsilon,k}= \{1,2,\dots,k \}\setminus T^+_{\varepsilon,k},
\\
& T= \biggl\{k: k\in\mathbb{N}, p_k < \frac{1}{2}q_{\min}
\biggr\},
\\
& T_k=T\cap\{1,2,\dots,k\},
\\
& T_{\varepsilon,k}=T^-_{\varepsilon,k} \setminus T_k. \end{aligned}
\]

\textit{Step 1.} Let us show that if $\dim_P(\mu_\xi)=1$ and $B=0$,
then $F_\xi$ is $\mathit{PDP}$.
Since $B=0$, we see that
\[
\lim_{k\rightarrow\infty} \frac{\sum_{j\in T_k} \ln{p_j}}{k\ln{q_{\min}}}=0.
\]

Consider the fraction
\[
\begin{aligned}
&\frac{\ln\mu_\xi(\varDelta_{a_1 a_2 \dots a_k(x)})}{\ln\lambda(\varDelta_{a_1 a_2 \dots a_k(x)})}\\
&\quad  = \frac{\sum_{j\in T^+_{\varepsilon,k}} \ln{p_{a_j(x)j}}+\sum_{j\in
T_{\varepsilon,k}} \ln{p_{a_j(x)j}}+\sum_{j\in T_k} \ln
{p_{a_j(x)j}}}{\sum_j \ln{q_{a_j(x) j}}}. \end{aligned} %
\]

Split this fraction into three terms. Consider the first term
\[
\frac{\sum_{j\in T^+_{\varepsilon,k}} \ln{p_{a_j(x)j}}}{\sum_j \ln
{q_{a_j(x)j}}}.
\]

It is easy to prove that
\[
\begin{aligned}
 \sum_{j\in T^+_{\varepsilon,k}}\ln{p_{a_j(x)j}} &\geqslant \sum_{j\in T^+_{\varepsilon,k}}
\ln(q_{a_j(x)j}-\varepsilon)\\
&=  \sum_{j\in T^+_{\varepsilon,k}} \biggl( \ln{q_{a_j(x)j}}+\ln
\biggl(\frac{q_{a_j(x)j}-\varepsilon}{q_{a_j(x)j}} \biggr) \biggr) \\
&\geqslant \sum_{j\in T^+_{\varepsilon,k}} ( \ln{q_{a_j(x)j}} )+|T^+_{\varepsilon,k}|\cdot
\frac{2\varepsilon}{q_{\min}}, \end{aligned} %
\]
where $|T^+_{\varepsilon,k}|$ is the number of elements in
$T^+_{\varepsilon,k}$. On the other hand,
\[
\sum_{j\in T^+_{\varepsilon,k}} \ln{p_{a_j(x)j}} \leqslant \sum
_{j\in T^+_{\varepsilon,k}} ( \ln{q_{a_j(x)j}} )-|T^+_{\varepsilon,k}|
\cdot\frac{2\varepsilon}{q_{\min}}.
\]
Also,
\[
1+\frac{|T^+_{\varepsilon,k}|\cdot2\varepsilon}{q_{\min}\cdot\sum_{j=0}^k \ln{q_{a_j(x)j}}} \leqslant\lim_{k\rightarrow\infty} \frac{\sum_{j\in T^+_{\varepsilon
,k}} \ln{p_{a_j(x)j}}}{\sum_{j=0}^k \ln{q_{a_j(x)j}}}
\leqslant1-\frac{|T^+_{\varepsilon,k}|\cdot2\varepsilon}{q_{\min
}\cdot\sum_{j=0}^k \ln{q_{a_j(x)j}}}
\]
(note that $\frac{|T^+_{\varepsilon,k}|\cdot2\varepsilon}{q_{\min
}\cdot\sum_{j=0}^k \ln{q_{a_j(x)j}}}<0$).\vadjust{\eject}

Since $q_{\min}\leqslant q_{ij}\leqslant q_{\max}$ and
$|T^+_{\varepsilon,k}|\leqslant k$, we have
\[
1+\frac{k\cdot2\varepsilon}{q_{\min}\cdot k \ln{q_{\max}}} \leqslant\lim_{k\rightarrow\infty} \frac{\sum_{j\in T^+_{\varepsilon
,k}} \ln{p_{a_j(x)j}}}{\sum_{j=0}^k \ln{q_{a_j(x)j}}}
\leqslant1-\frac{k\cdot2\varepsilon}{q_{\min}\cdot k \ln{q_{\max}}}
\]
and
\[
1+\frac{2\varepsilon}{q_{\min}\cdot\ln{q_{\max}}} \leqslant\lim_{k\rightarrow\infty} \frac{\sum_{j\in T^+_{\varepsilon
,k}} \ln{p_{a_j(x)j}}}{\sum_{j=0}^k \ln{q_{a_j(x)j}}}
\leqslant1-\frac{2\varepsilon}{q_{\min}\cdot\ln{q_{\max}}}.
\]
Since $\varepsilon$ can be arbitrarily small, it follows that
\[
\lim_{k\rightarrow\infty} \frac{\sum_{j\in T^+_{\varepsilon,k}} \ln
{p_{a_j(x)j}}}{\sum_{j=0}^k \ln{q_{a_j(x)j}}}=1.
\]

Similarly,
\[
|T_{\varepsilon,k}|\ln \biggl(\frac{q_{\min}}{2} \biggr) \leqslant\sum
_{j\in T_{\varepsilon,k}} \ln{p_{a_j(x)j}} \leqslant|T_{\varepsilon,k}|\ln
\biggl(\frac{2-q_{\min}}{2} \biggr).
\]

Therefore,
\[
\frac{\sum_{j\in T_{\varepsilon,k}} \ln{p_{a_j(x)j}}}{k \ln{q_{\min}}} \leqslant \frac{|T_{\varepsilon,k}|\ln (\frac{q_{\min}}{2} )}{k \ln
{q_{\min}}} \leqslant \frac{|T_{\varepsilon,k}|(\ln(q_{\min})+\ln(1/2))}{k \ln{q_{\min}}},
\]
and the second term tends to zero as $k\rightarrow\infty$.

Consider the third term
\[
\frac{\sum_{j\in T_k} \ln{p_{a_j(x)j}}}{\sum_j \ln{q_{a_j(x)j}}}.
\]
It can be estimated by
\[
\frac{\sum_{j\in T_k} \ln{p_j}}{k\ln{q_{\min}}},
\]
and this value tends to zero as $k\rightarrow\infty$ because $B=0$.

We obtain that
\[
\lim_{k\rightarrow\infty} \frac{\mu_\xi(\varDelta_{a_1 a_2 \dots a_{k}(x)})}{\lambda(\varDelta_{a_1 a_2
\dots a_{k}(x)})}=1.
\]

Denote by $\varPhi$ the cylinder family of given $\tilde{Q}$-expansion.
Denote the image of $\varPhi$ by $\varPhi'=F_\xi(\varPhi)$.

Using the Billingsley theorem for packing dimension \cite
{slutskyi-torbin-billingsley-theorem-analog-for-pack-dim}, we have
\[
\dim_P(E,\varPhi)=1\cdot\dim_P\bigl(F_\xi(E),
\varPhi'\bigr)\quad\forall E\subset[0;1].
\]

To prove that $\dim_P(E)=\dim_P(F_\xi(E))$, it suffices to prove that
$\varPhi$ and $\varPhi'$ are faithful.

Faithfulness of $\varPhi$ is already proved. Faithfulness of $\varPhi'$ was
proved in Lemma~1 and Lemma 2. So, we have that $\dim_P(E)=\dim_P(F_\xi
(E))$ and $F_\xi$ is a $\mathit{PDP}$-trans\-formation.\vadjust{\eject}

\textit{Step 2.}
Let us show that if $\dim_P(\mu_\xi)=1$ and $B>0$, then $F_\xi$ is not $\mathit{PDP}$.

Similarly to step 1, consider the fraction
\[
\begin{aligned} &\frac{\mu(\varDelta_{a_1 a_2 \dots a_k(x)})}{\lambda(\varDelta_{a_1 a_2
\dots a_k(x)})}\\
&\quad  = \frac{\sum_{j\in T^+_{\varepsilon,k}} \ln{p_{a_j(x)j}}+\sum_{j\in
T_{\varepsilon,k}} \ln{p_{a_j(x)j}}+\sum_{j\in T_k} \ln
{p_{a_j(x)j}}}{\sum_j \ln{q_{a_j(x)j}}} \end{aligned} %
\]
and split it into three terms. It is easy to see that the first term
tends to 1 and the second term tends to 0 (as $k\rightarrow\infty$).
Consider the third term.

Since $B>0$, there exists a subsequence $(k_m)$ such that
\[
\lim_{m\rightarrow\infty} \frac{\sum_{j\in T_{k_m}} \ln\frac{1}{p_j}}{k_m}=B.
\]
Consider the set
\[
L= \left\{ x: x=\varDelta_{a_1 a_2 \dots a_k \dots}; \Bigg\{ %
\begin{aligned} &
a_k \in\{0,1,\dots,s-1\} \text{ if } k\notin T
\\
& a_k=n_k, \text{ if } k\in T, \text{ where }
p_{n_k k}=\min_i p_{ik} \end{aligned}
\right\}.
\]
Since the digits are in infinitely many places, it follows that $\lambda
(L)=0$. But combining
\[
\lim_{m\rightarrow\infty} \frac{|T_{k_m}|}{k_m}=0
\]
and the formula for $\dim_P(\mu_\xi)$, we have $\dim_P(L)=1$.
It follows that
\[
\forall x\in L \quad\lim_{m\rightarrow\infty} \frac{\ln\mu(\varDelta_{a_1
a_2 \dots a_{k_m}(x)})}{\ln\lambda(\varDelta_{a_1 a_2 \dots a_{k_m}(x)})}=1+B.
\]
Thus, for any $\delta>0$, there exists $m(\delta)$ such that for all
$m>m(\delta)$, we have
\[
1+B-\delta\leqslant \frac{\ln\mu(\varDelta_{a_1 a_2 \dots a_{k_m}(x)})}{\ln\lambda(\varDelta_{a_1
a_2 \dots a_{k_m}(x)})} \leqslant1+B+\delta.
\]
Thus, we have
\[
\liminf_{k\rightarrow\infty} \frac
{\ln\mu(\varDelta_{a_1 a_2 \dots a_k(x)})}{
\ln\lambda(\varDelta_{a_1 a_2 \dots a_k(x)})} \geqslant1+B-\delta,
\]
and (using the Billingsley theorem for $\dim_P$)
\[
\dim_{P-\mu} (L) \cdot(1+B-\delta) \leqslant\dim_P (L),
\]
that is,
\[
\dim_P\bigl(F_\xi(L)\bigr)\leqslant\frac{1}{1+B-\delta}.
\]
Since the last inequality holds for any $\delta$, it follows that
\[
\dim_P\bigl(F_\xi(L)\bigr)\leqslant\frac{1}{1+B},
\]
and $F_\xi$ is not a $\mathit{PDP}$-transformation.
\end{proof}

\begin{corollary}
Let $\inf_{i,j} q_{ij}:=q_{\min}$. Suppose that $q_{\min}>0$. Let
\[
\begin{aligned} & T:= \biggl\{ k: k\in\mathbb{N}, p_k<
\frac{q_{\min}}{2} \biggr\};
\\
& T_k:=T\cap\{1,2,\dots,k\};
\\
& B:=\limsup_{k\rightarrow\infty} \frac{\sum_{j\in T_k} \ln\frac{1}{p_j}}{k}. \end{aligned}
\]

Let $F_\xi$ be the distribution function of a random variable \(\xi\)
with independent $Q^*$-representation. Then $F_\xi$ preserves the
packing dimension if and only if
\[
\left\{ %
\begin{aligned} &\dim_P \mu_\xi=1;
\\
&B=0. \end{aligned} %
\right.
\]
\end{corollary}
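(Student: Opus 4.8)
The plan is to deduce this statement directly from the main result by observing that the $Q^*$-expansion is a special case of the $\tilde{Q}$-expansion. Recall that the classes are nested, $s\text{-adic}\subset Q\subset Q^*\subset\tilde{Q}$; this is precisely the inclusion already exploited in Section~5, where faithfulness of the $Q^*$-cylinder family was obtained as a corollary of Theorem~\ref{faithfulness-of-tilde-q}. First I would make the embedding explicit: every admissible $Q^*$-matrix $\|q_{ij}\|$ is in particular an admissible $\tilde{Q}$-matrix, the stochasticity of its columns is unchanged, and the standing hypothesis $\inf_{i,j}q_{ij}=q_{\min}>0$ carries over verbatim.

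Under this identification, every object appearing in the statement has the same meaning as in the $\tilde{Q}$-setting: the digit probabilities $p_{ij}$, the base coefficients $q_{ij}$, the sets $T$, $T_k$, the quantity $B$, and the spectral measure $\mu_\xi$ together with its packing dimension $\dim_P\mu_\xi$ are all given by identical formulas. Consequently the distribution function $F_\xi$ of a random variable with independent $Q^*$-digits is literally the distribution function of a random variable with independent $\tilde{Q}$-digits for the corresponding $\tilde{Q}$-matrix, and the criterion of the main theorem applies without any change: $F_\xi$ is $\mathit{PDP}$ if and only if $\dim_P\mu_\xi=1$ and $B=0$.

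The only point requiring attention---and the one I regard as the main, though minor, obstacle---is to check that the admissibility constraints defining a genuine $Q^*$-expansion are indeed a special case of those defining a $\tilde{Q}$-expansion, so that all hypotheses of the main theorem are met rather than merely most of them. Once this routine verification is in place, no further argument is needed: both the hypotheses and the conclusion of the corollary coincide term by term with those of the main result, and the equivalence follows by specialization.
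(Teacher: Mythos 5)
Your proposal is correct and matches the paper's (implicit) reasoning exactly: the paper states this corollary immediately after the main theorem with no separate proof, precisely because the $Q^*$-expansion is a special case of the $\tilde{Q}$-expansion, so the criterion specializes verbatim once one notes that the hypothesis $\inf_{i,j}q_{ij}=q_{\min}>0$ and all the quantities $T$, $T_k$, $B$, $\dim_P\mu_\xi$ carry over unchanged. Your extra care about admissibility constraints is the same routine verification the paper leaves to the reader, mirroring how the faithfulness corollaries for $Q^*$-, $Q$-, and $s$-adic cylinders follow from Theorem~\ref{faithfulness-of-tilde-q}.
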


\begin{corollary}
Let $s\in\mathbb{N}$, $s\geqslant2$;
\[
\begin{aligned} & T:= \biggl\{ k: k\in\mathbb{N}, p_k<
\frac{1}{2s} \biggr\};
\\
& T_k:=T\cap\{1,2,\dots,k\};
\\
& B:=\limsup_{k\rightarrow\infty} \frac{\sum_{j\in T_k} \ln\frac{1}{p_j}}{k}. \end{aligned}
\]

Let $F_\xi$ be the distribution function of a random variable \(\xi\)
with independent $s$-adic digits. Then $F_\xi$ preserves the packing
dimension if and only if
\[
\begin{cases}
\dim_P \mu_\xi=1;\\
B=0.
\end{cases} %
\]
\end{corollary}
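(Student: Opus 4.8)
The plan is to deduce this corollary as an immediate specialization of the main theorem (equivalently, of the preceding $Q^*$-corollary), since the $s$-adic expansion is precisely the $\tilde{Q}$-expansion whose matrix $\tilde{Q}=\|q_{ij}\|$ has constant entries $q_{ij}=1/s$ for all $i\in\{0,1,\dots,s-1\}$ and all $j\in\mathbb{N}$, with fixed alphabet size $N_j=s$. First I would record that $q_{\min}=\inf_{i,j}q_{ij}=1/s$, which is strictly positive; thus the standing hypothesis $q_{\min}>0$ of the main theorem is satisfied automatically (the assumption $s\geqslant 2$ merely guarantees a nondegenerate base-$s$ expansion). In particular, the faithfulness of the $\tilde{Q}$-cylinder family supplied by Theorem~\ref{faithfulness-of-tilde-q} is available with no additional hypotheses.

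Next I would verify that every object in the statement coincides with its counterpart in the main theorem after the substitution $q_{\min}=1/s$. The threshold $q_{\min}/2$ becomes $1/(2s)$, so the set $T=\{k:p_k<1/(2s)\}$ is exactly $\{k:p_k<q_{\min}/2\}$; the truncations $T_k=T\cap\{1,\dots,k\}$ and the quantity $B=\limsup_{k\to\infty}\frac{1}{k}\sum_{j\in T_k}\ln\frac{1}{p_j}$ are literally the same expressions. Under this identification the $s$-adic random variable $\xi$ is a random variable with independent $\tilde{Q}$-digits whose digit probabilities are the given $p_{ij}$, so the main theorem applies to $F_\xi$ directly and yields the claimed equivalence ``$F_\xi$ is $\mathit{PDP}$ if and only if $\dim_P\mu_\xi=1$ and $B=0$''.

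There is no genuine obstacle here: all the substantive work, namely the faithfulness results and the two-step argument of the main proof, applies without change. The only point worth noting is that the constant entries force $\ln\lambda(\varDelta_n(x))=-n\ln s$, so the denominator $\sum_j\ln q_{a_j(x)j}$ appearing throughout Steps~1 and~2 reduces to $-k\ln s$; this merely streamlines the estimates without altering any conclusion. Hence the corollary follows at once.
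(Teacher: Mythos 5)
Your proposal is correct and matches the paper's (implicit) argument: the corollary is stated without a separate proof precisely because it is the specialization of the main theorem to the $\tilde{Q}$-matrix with constant entries $q_{ij}=1/s$, for which $q_{\min}=1/s>0$ and the sets $T$, $T_k$ and the quantity $B$ coincide literally with those of the theorem. Your verification of this identification, including the threshold $q_{\min}/2=1/(2s)$, is exactly what is needed.
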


%

\end{document}